\newtheorem{theorem}{Theorem}[section]
\newtheorem*{theorem*}{Main Theorem 1}
\newtheorem*{theorem**}{Main Theorem 2}
\newtheorem{lemma}[theorem]{Lemma}
\newtheorem{corollary}[theorem]{Corollary}
\newtheorem{conjecture}[theorem]{Conjecture}
\theoremstyle{definition}
\newtheorem{definition}[theorem]{Definition}
\theoremstyle{remark}
\newtheorem{remark}[theorem]{Remark}
\newtheorem{question}[theorem]{Question}
\newtheorem{notation}[theorem]{Notation}
\numberwithin{equation}{section}
\newcounter{casenum}
  \newenvironment{caseof}{\setcounter{casenum}{1}}{\vskip.5\baselineskip}
\newcommand\addtag{\refstepcounter{equation}\tag{\theequation}}
\def\localbig#1#2{%
  \sbox\z@{$\m@th#1
    \sbox\tw@{$#1()$}%
    \dimen@=\ht\tw@\advance\dimen@\dp\tw@
    \nulldelimiterspace\z@\left#2\vcenter to1.2\dimen@{}\right.
  $}\box\z@}
\newcommand{\divides}{\mathrel{\mathpalette\dividesaux\relax}}
\newcommand{\ndivides}{\mathrel{\mathpalette\ndividesaux\relax}}
\newcommand{\dividesaux}[2]{\mbox{$\m@th#1\localbig{#1}|$}}
\newcommand{\ndividesaux}[2]{%
  \mkern.5mu
  \ooalign{%
    \hidewidth$\m@th#1\localbig{#1}|$\hidewidth\cr
    $\m@th#1\nmid$\cr%
  }%
}
\author{Dheeraj Kulkarni}
\address{Department of Mathematics \\ Indian Institute Of Science Education and Research, Bhopal.}
\email{dheeraj@iiserb.ac.in}
\author{T. V. H. Prathamesh}
\address{Institute Of Mathematical Sciences, HBNI, Chennai}
\email{prathamesh@imsc.res.in}
\begin{document}
\title[Legendrian Racks]{On Rack Invariants of Legendrian Knots }
\begin{abstract}
In this article, we introduce rack invariants of oriented Legendrian knots in the 3-dimensional Euclidean space endowed with the standard contact structure, which we call Legendrian racks. These invariants form a generalization of the quandle invariants of knots. These rack invariants do not result in a complete invariant, but detect some of the geometric properties such as cusps in a Legendrian knot.

In the case of topologically trivial Legendrian knots, we test this family of invariants for its strengths and limitations. We further prove that these invariants form a  natural generalization of the quandle invariant, by which we mean that any rack invariant under certain restrictions is equivalent to a Legendrian rack. 

The axioms of these racks are expressible in first order logic, and were discovered through a series of experiments using an automated theorem prover for first order logic. We also present the results from the experiments on Legendrian unknots involving automated theorem provers, and describe how they led to our current formulation. 

\end{abstract}
\maketitle

\section{Introduction}
Racks are non-associative algebraic structures, which proved to be a source of invariants of various classes of knots. Idempotent racks, called quandles, are an invariant of (tame) knots under ambient isotopy. \cite{Joyce} Generalizations of this invariant were subsequently derived for framed links and virtual knots \cite{FennRourke} \cite{VirtRacks}. 

Distinguishing racks and quandles is in practice very difficult. Advances in automated theorem proving for first-order logic, enabled use of computers to distinguish non-triviality of racks and quandles. This led to the possibility of using automated theorem provers for unknot recognition \cite{Fish}. 

The above mentioned computations were made possible by the fact that definitions of racks and quandles are expressible in first-order logic. On these lines, we ask the following question:
\begin{question}
Are there any rack invariants of  Legendrian knots which are axiomatisable in first order logic? 
\end{question}

In this article, we introduce a family of racks, which we call  \emph{Legendrian racks} in Defintion \ref{Leg_racL_defn}. To every front projection $D_L$ of a Legendrian knot $L$, we associate a family of Legendrian racks indexed by natural numbers, $\mathcal{LR}(D_L) = \{ LR_n(D_L) \big\}_{n \in \mathbb{N}}$. The main result of this article establishes the invariance of the family under Legendrian Reidemeister moves. Thus, $\mathcal{LR}(D_L) $ defines an invariant of $L$ up to Legendrian isotopy. 

\begin{theorem*}
Each Legendrian rack $LR_n(D_L) $ associated to a front projection $D_L$ of an oriented Legendrian knot $L$ is independent of the choice of front projection of $L$. Therefore, $LR_n(D_L) $ is an invariant of $L $ up to isotopy via a family of oriented Legendrian knots.
\end{theorem*}
Each Legendrian rack $LR_n(D_L) $ associated to a front projection $D_L$ of oriented Legendrian knot $L$ is independent of the choice of front projection of $L$. Therefore, $LR_n(D_L) $ is an invariant of $L $ up to isotopy via a family of oriented Legendrian knots.
We further prove that Legendrian racks corresponding to topologically distinct Legendrian knots, are distinct. An invariant of Legendrian knots is of any interest, only if it distinguish Legendrian knots which correspond to the same topological knot. To illustrate this, we show that Legendrian racks  distinguish a large class of Legendrian knots corresponding to the topological unknot.

\begin{theorem**}
Let $L_1$ and $L_2$ be two topologically trivial Legendrian knots. If there exists an odd prime $p$ and a positive integer $k$ such that: 
\begin{itemize}
\item $p^k \divides s_{min}(L_1)$
\item $p^k \ndivides s_{min}(L_2)$
\end{itemize}
Then $\mathcal{LR}(L_1) \neq \mathcal{LR}(L_2)$.
\end{theorem**}
The function $s_{min}$ on Legendrian knots equals $2 \mid tb(L) \mid$, where $tb$ is the Thurston-Bennequin number. We further prove that two Legendrian unknots with the same Thurston-Bennequin number, map to isomorphic Legendrian racks, which further strengthens the link with Thurston-Bennequin number. Thurston-Benequinn number of a Legendrian knot contains information only about the total number of cusps in the front projection, and does not contain any information about the difference in the number of up and down cusps in the front projections of the knot. This motivates us to ask the following question:
\begin{question}
Does there exist a rack invariant of oriented Legendrian knots, defined in terms of the front projection, which contains information about the difference in the number of up and down cusps in the front projections of the Legendrian knot?
\end{question}

We show in Theorem \ref{LPRimpliesLR} that under some assumptions about the expressibility of the definition, such an invariant will be equivalent to Legendrian racks. The proof is derived by considering a set of universally quantified axioms expressible in first-order logic that such an invariant should satisfy, and by further showing that such an invariant reduces to a Legendrian rack. Theorems \ref{LPRimpliesLR} to \ref{cusp_equiv_nth_power} deal with the above-mentioned   results.

The work has an experimental component to it, as discovery of many of the results described above were largely aided by the use of an automated theorem prover - Prover9 and Mace4. This includes generation of proofs, counter-examples and even guiding the authors towards more elegant definitions.

This work should be of interest to contact topologists studying Legendrian knots, as well as those seeking newer applications of racks and quandles. It might also be of potential interest to those interested in applications of automated reasoning in mathematics. 

The article is organized as follows:
 We have added a brief recall of all the relevant notions from contact topology keeping in mind the wide audience the article may reach. Thus, a reader with background in contact topology may safely skip it. Section \ref{Leg_racL_defn} contains the definition of Legendrian racks and it shows how to 
associate Legendrian racks to
oriented Legendrian knots using front projections. Section \ref{Leg_racL_defn} also contains the proof of invariance of Legendrian racks under Legendrian isotopy. Section \ref{Leg_unknot} discusses the ability of Legendrian racks to distinguish topologically equivalent Legendrian knots by considering the case of Legendrian unknots.
Section \ref{alt_defn}  contains the metamathematical results, which explain why Legendrian racks are the appropriate generalization. Section \ref{fotp}, we discuss the experimental aspect of the work using automated theorem provers. Section \ref{concl} contains further questions and conclusion.

\vspace{3mm}
{\bf Acknowledgements:} The authors would like to thank Indian Statistical Insitute, Kolkata, India, Harish-Chandra Research Institute, Allahabad, India and Institute of Mathematical Sciences, India for their support when this work was carried out. The first author wishes to gratefully acknowledge the support from IISER Bhopal through the grant IISERB/INS/MATH/2016091 during the final part of this work.

%\section{Preliminaries}\label{preli}
\subsection{The standard Contact Structure on $\mathbb{R}^3 $}
\begin{definition}
Let $(x, y, z) $ denote the standard coordinate system on $\mathbb{R}^3$. The standard contact structure on $\mathbb{R}^3$ is a 2-plane field given by the $\text{ker} (dz-ydx) $. We denote it by $\xi_{std} $.
\end{definition}
%Note that by Frobenius' Theorem the plane field $\xi_{std} $ is nowhere integrable as $ (dz-ydx) \wedge d(dz-ydx) = dx 
%\wedge dy \wedge dz  \neq 0$. It is instructive and fun to draw picture of $\mathbb{R}^3 $ with contact planes. To draw
%a picture of contact structure $\xi_{std} $, we observe that the translations along $z$-axis and $x $-axis preserve contact 
%form therefore contact planes are also preserved. Thus, we need to understand contact planes only along $y$-axis. Further, 
%notice that $\frac{\partial}{\partial y} $, unit tangent vectors to $y$-axis belong (or tangent) to contact planes at all 
%points. See Figure \ref{con_str}.
%
%\begin{figure}
%\centering
%\scalebox{0.5}{}
%\caption{The standard contact structure on $\mathbb{R}^3 $}
%\end{figure} \label{con_str}

In general, contact structures are defined as nowhere integrable hyperplane fields of 
codimension one on a manifold. The study of contact structures, in recent times, has seen very rapid progress. Especially 
in dimension three, contact structures have been understood to great extent. Further, the area of contact topology is 
interlinked with low dimensional topology. For a comprehensive introduction to contact structures reader is referred to \cite{Geiges} 
.

\subsection{Legendrian knots}
\begin{definition}
A (smooth) knot $L$ is in $\mathbb{R}^3$ is called \textit{Legendrian} if at every point $p \in L $, the tangent space $T_p L $
is a subspace of $\xi_{std}|_p $.
\end{definition}\label{Lknot}
 In other words, $L$ is tangent to the contact planes at all points on $L$. 
\begin{definition} 
 We say that Legendrian knots $L_1$ and $L_2$ are \textit{Legendrian isotopic} if there is an isotopy taking $L_1$ to $L_2$ through family of Legendrian knots. 
\end{definition}

In this article, we will be concerned with \emph{oriented} Legendrian knots. We say that Legendrian knot is oriented if there exists an orientation on the knot. Equivalence of two oriented Legendrian knots is defined through an orientation preserving isotopy.
%The classification of Legendrian knots upto Legendrian isotopies is an important problem in the field of contact topology.
Legendrian knots are studied by looking at their {\em front projections}. Now, we explain the idea of front projection. Let $\varphi : \mathbb{S}^1 \rightarrow \mathbb{R} ^3 $ be a Legendrian knot. We write $\varphi = (x(\theta), y(\theta), z(\theta)) $. Since $Im (\varphi) $ is Legendrian, we obtain the following equation.
$$\dot{z}(\theta) - y(\theta) \dot{x}(\theta)=0  $$
One may rewrite the above as $ y(\theta) = \frac{dz}{dx}|_{\theta} $ provided $(x(\theta, z(\theta)) $ has no vertical 
tangency. This implies that the coordinate $y $ of the knot $Im(\varphi) $ can be recovered from slope of its $x-z $ 
projection as long as there is no vertical tangency. A generic $ x-z$ projection will have finitely many cusp singularities
to replace vertical tangencies and only one type of crossing appears as shown in the Figure \ref{Leg_trefoil}. 

\begin{figure}
\centering
\scalebox{0.5}{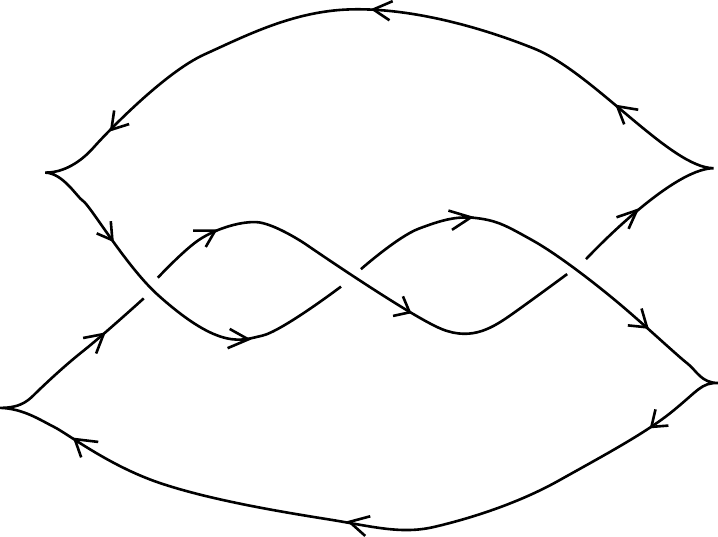}
\caption{A front projection of Legendrian Trefoil.}\label{Leg_trefoil}
\end{figure}

The Legendrian isotopy can be understood via Reidemeister moves in the front projection. We will refer to these moves as {\em Legendrian Reidemeister moves}. Figure \ref{LR-moves} shows three basic types of Legendrian Redemeister moves which are analogous to the Reidemeister moves in classical knot theory. One can obtain all Legendrian Reidemeister moves by rotating each diagram by $180 $ degrees about all the coordinate axes. Further one may add orientations to these diagram.\\
\begin{figure}[h!]
\centering
\scalebox{0.5}{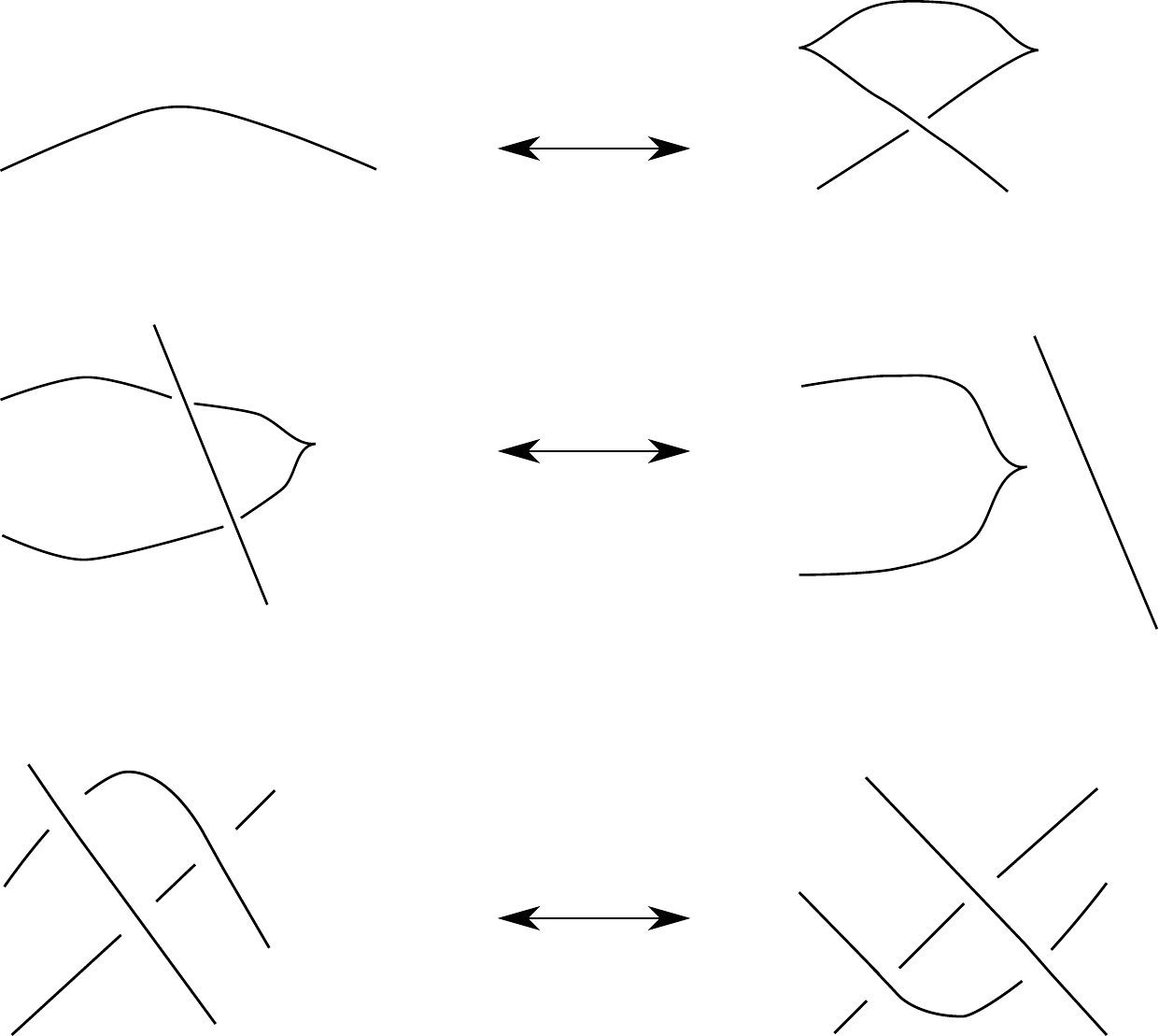}
\caption{Legendrian Reidemeister moves}\label{LR-moves}
\end{figure}

%[add diagrams of all LR moves. It will be convenient later to recall them for LR rack axioms]\\

%\begin{figure}
%\centering
%\scalebox{0.5}{\input{LR-2_1.pdf_tex}}
%\caption{Legendrian Reidemeister moves}
%\end{figure}

%\begin{figure}
%\centering
%\scalebox{0.5}{\input{LR-2_2.pdf_tex}}
%\caption{Legendrian Reidemeister moves}
%\end{figure}

The classification of Legendrian knots up to Legendrian isotopies is a challenging problem. There are classical invariants 
such as rotation number, Thurston-Bennequin number for knots in $\mathbb{R}^3 $. However, these invariants are not 
sufficient to classify Legendrian knots. contact homology, Chekanov's DGA are some recent tools which are finer than the classical invariants (See \cite{Etnyre}). Eliashberg and Fraser proved in \cite{Eliash} that in the case of topologically trivial Legendrian unknots rotation number 
together with Thurston-Bennequin number completely determines the Legendrian unknot.
%It has generated rich mathematical structures such as contact homology, Eliashberg-Chekanov's differential graded algebras besides the classical invariants given by rotation number and Thurston-Bennequin invariant.

\section{Legendrian racks and Legendrian knots}\label{Leg_racL_defn}

The challenge in defining racks for oriented Legendrian knots lies in defining a suitable mathematical 
structure for encoding information about cusps. When associating a quandle to a knot 
diagram, we assign a generator of the quandle to each strand of the knot, and crossings give rise to quandle relations.
In the case of Legendrian knots, we associate a family of racks to a Legendrian knot. Such an association is obtained by  
redefining strands to refer to (continuous) segments of a knot diagram that either begin or end in a crossing or a  cusp. 
Each crossing and cusp give rise to new relations on a rack. We prove that this structure is an invariant of 
oriented Legendrian knots upto orientation preserving Legendrian isotopy. We begin by revisiting the definition of a rack.

\begin{definition} A rack is set $R$ with binary operations $(* , /)$, such that every $x$, $y$ and $z$:
\begin{enumerate}
\item \textbf{Self-Distributivity:} $(x*y)*z = (x*z)*(y*z)$ and $(x/y)/z = (x/z)/(y/z)$ .
\item \textbf{Existence of Right Inverse:} $(x*y)/y = x$, and  $(x/y)*y = x$.
\end{enumerate}
\end{definition}

\begin{notation} We will use $(R, *)$ to denote a rack, since $/$ can be defined in terms of $*$.
\end{notation}

\begin{definition}: The power function on a rack $(R, *)$ is the function $pow$: $ \mathbb{Z} \backslash \{0,-1\} \times  R \rightarrow R$, such that:
\[ pow (n, x) = 
\begin{cases}
    x,& \text{if } n =  1\\
    ((pow (n-1,x))*x), &\text{if } n > 1\\
  (x/x), &\text{if } n = -2\\
  ((pow (n+1,x))/x) &\text{if } n < -2\\
\end{cases}
\]
\end{definition}

\begin{notation}  We will use $x^n$ to denote $pow(n,x)$.\end{notation}

To illustrate the definition above, $x^4$ in the above notation would expand to $(((x*x)*x)*x)$, while $x^{-3}$ would expand to $((x/x)/x)$.

\begin{definition}[Legendrian rack]\label{LracL_defn} Let $n \in \mathbb{N}$. An $n$-Legendrian rack $(LR_n, *)$ is a rack such that:
\[\forall x.\ x^{2n+2} = x\]
\end{definition}

\begin{remark} $0$-Legendrian rack is the quandle.
\end{remark}

\subsubsection{Examples:}We have the following example of a finite $n$-Legendrian rack. Consider the rack $(C_k, *)$ defined as follows:
\begin{itemize}
\item  $C_k = \{0,\ 1,\ 2,\ ...,\ k-1\ \}$.
\item $i*j\ =\ (i+1)\ mod\ k$.
\end{itemize}

\begin{notation} We will use the terms $LR_n$ and $C_k$ to refer to the associated racks. 
\end{notation}

We have the following proposition:
\begin{theorem}\label{fin-L-rack}
If $k > 1$ and it divides $2n+1$, then $C_k$ is an $n$-Legendrian rack.
\end{theorem}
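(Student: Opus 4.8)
The plan is to verify the two defining properties of an $n$-Legendrian rack for $(C_k,*)$: first that it is a rack at all, and then that it satisfies the identity $x^{2n+2}=x$.

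For the rack axioms I would first pin down the dual operation $/$. Since $i*j = (i+1)\bmod k$ depends only on $i$, the right-inverse axiom $(x*y)/y = x$ forces $i/j = (i-1)\bmod k$; with this choice one checks immediately that both right-inverse identities $(x*y)/y=x$ and $(x/y)*y=x$ hold. Self-distributivity is then a one-line computation in $\mathbb{Z}/k\mathbb{Z}$: both $(i*j)*l$ and $(i*l)*(j*l)$ equal $(i+2)\bmod k$, and likewise both sides of the $/$-self-distributivity equal $(i-2)\bmod k$. Thus $(C_k,*)$ is a rack for every $k\ge 1$; the hypothesis $k>1$ is needed only to ensure the rack is not idempotent, hence a genuine non-quandle example (idempotency $i*i=i$ would require $k\divides 1$).

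The core of the argument is a closed formula for powers. I would prove by induction on $m$ that for every $i\in C_k$ and every integer $m\ge 1$ one has $i^{m} = (i+m-1)\bmod k$. The base case $m=1$ is $i^1=i$, and for $m>1$ the recursive clause of $pow$ gives $i^{m} = i^{m-1}*i = \big((i+m-2)\bmod k\big)*i = (i+m-1)\bmod k$. Since $n\in\mathbb{N}$ we have $2n+2\ge 2$, so only the clauses $n=1$ and $n>1$ of the power function are ever invoked and the negative branches are irrelevant. Applying the formula with $m=2n+2$ yields $i^{2n+2} = (i+2n+1)\bmod k$, which equals $i\bmod k$ precisely because $k\divides 2n+1$. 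Hence $\forall x.\ x^{2n+2}=x$ holds in $C_k$, so $C_k$ is an $n$-Legendrian rack.

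I do not expect a real obstacle: every step is elementary modular arithmetic. The only points that need a little care are (i) correctly reading off $/$ from the right-inverse axiom \emph{before} attempting self-distributivity, and (ii) setting up the induction for the power formula so that it lines up with the recursive clauses of $pow$.
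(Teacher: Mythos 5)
Your proof is correct and follows essentially the same route as the paper's: the paper's two-line argument simply asserts $i^{2n+2}\equiv i+2n+1 \pmod{k}$ from the definition of the operation and concludes via $k\divides(2n+1)$, which is exactly your closed-form power formula specialized to $m=2n+2$. Your additional verification of the rack axioms and the explicit induction are details the paper leaves implicit, not a different approach.
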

\begin{proof}
By the definition of rack operation on $C_k $, we have $i^{2n+2} \equiv i+2n+1\ (\text{mod}\ k)$ in $C_k $.
Since $k\ |\ (2n+1) $, we get $i^{2n+2} \equiv i\ (\text{mod} \ k) $. Thus, $C_k $ is an $n$-Legendrian Rack.

\end{proof}

Given a front projection $D_L$ of an oriented Legendrian knot $L$, one can associate an indexed family of  $n$-Legendrian racks $\mathcal{LR}(D_L) = \big( LR_n(D_L) \big)_{n\in\mathbb{N}}$  to the front projection $D_L$ in the following fashion:

%\begin{figure}
%\centering
%\input{alg_L-racL_cusps.pdf_tex}
%\end{figure}

\begin{itemize}

\item We use the term strands to denote the connected segment of an arc in an oriented Legendrian knot diagram (in the front projection), which begin and end at a crossing or a cusp. A strand thus does not contain any cusps. A Legendrian knot diagram in the front projection can be pictured as a union of strands intersecting at cusps.

\item Each crossing gives rise to a new strand and a new relation along the orientation. The resultant relations are the same as in quandles and classical knots.

\item Each cusp also gives rise to a new strand and a new relation. If the strand `b' is related to the strand `a' by a cusp along the direction of orientation (See Figure \ref{cusp_relation}), then we have the equation 
\[b = a^{n+1}\]
\begin{figure}
\centering
\scalebox{0.75}{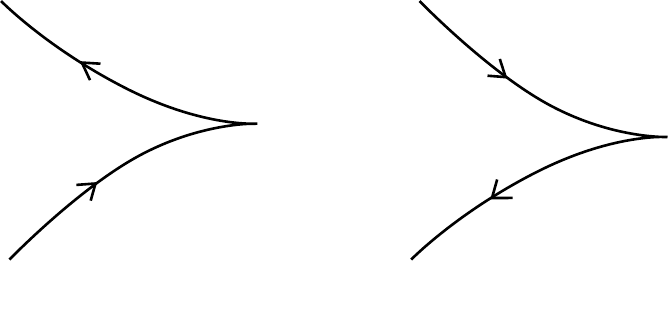}
\caption{Relations corresponding to cusps}\label{cusp_relation}
\end{figure}
\end{itemize}

The above can be illustrated by considering the following example of the Legendrian trefoil in Figure \ref{Leg_trefoil_with_labels}.
\begin{figure}[h!]
\centering
\begin{minipage}{.6\textwidth}
\centering
\scalebox{0.5}{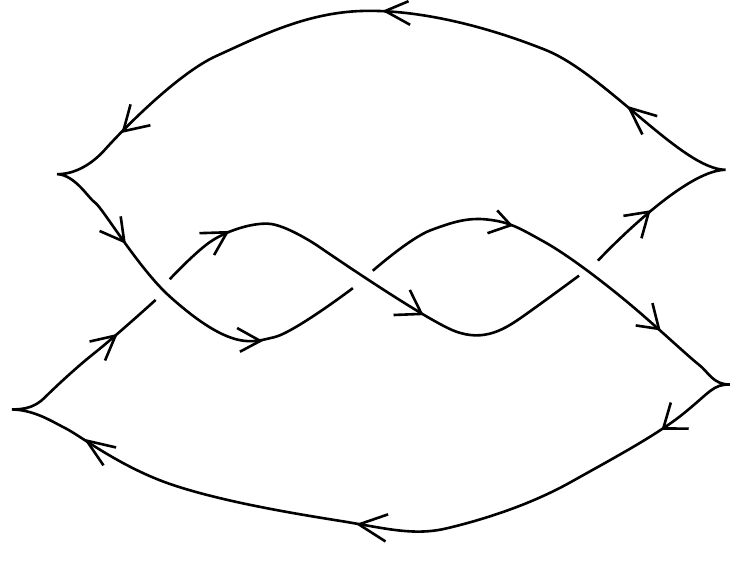}
\captionof{figure}{Legendrian Trefoil with labels}\label{Leg_trefoil_with_labels}
\end{minipage}
\begin{minipage}{.3\textwidth}
\centering
\begin{itemize}
\item $ b= a^{n+1} $.
\item $ c= b \ast f $
\item $ g = f \ast c $
\item $ d = c \ast g $
\item $ e =d^{n+1} $
\item $ f = e^{n+1}$
\item $ a = g^{n+1} $
\end{itemize}
Relations associated to crossings and cusps.
\end{minipage}
\end{figure}

\subsection{Invariance of $n$-Legendrian racks}
We prove the following intermediary lemmas, before we proceed to show the invariance. The following lemmas hold true in any rack $(R, *)$.

\begin{lemma}\label{LR1-1}
\[\forall x, y \in R.\ \ x * (y * y) = x * y\]
\end{lemma}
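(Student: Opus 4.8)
The plan is to obtain this identity as a one-line consequence of the two rack axioms, via a single substitution into the self-distributivity law.

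First I would instantiate the self-distributivity axiom $(x*y)*z = (x*z)*(y*z)$ at $z := y$, which yields $(x*y)*y = (x*y)*(y*y)$. The point of this choice is that it produces the term $y*y$ on the right, paired with the same ``prefix'' $x*y$ that on the left is paired merely with $y$; so up to replacing that prefix by an arbitrary element we are already done.

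Next I would substitute $x := x/y$ into the displayed instance and invoke the right-inverse axiom $(x/y)*y = x$ to collapse the prefix $(x/y)*y$ to $x$ on both sides. The equation then reads $x*y = x*(y*y)$, which is exactly the assertion of the lemma (read from right to left). Equivalently, one can argue via surjectivity: since $z\mapsto z*y$ is a bijection of $R$ with inverse $z\mapsto z/y$, writing $w := x*y$ and letting $w$ range over all of $R$ turns $(x*y)*y=(x*y)*(y*y)$ into $w*y = w*(y*y)$ for every $w\in R$.

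I do not expect a real obstacle here. The only point that needs a word of justification is that the substitution $x\mapsto x/y$ is legitimate and the resulting equation is universally quantified over $x$, which is immediate because $/$ is a total operation on $R$; this lemma is essentially a warm-up fact that will be reused when analyzing the cusp and Reidemeister relations.
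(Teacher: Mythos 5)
Your proof is correct and is essentially the paper's own argument: both rest on the single instance of self-distributivity with third argument $y$, followed by replacing the prefix $x*y$ by an arbitrary element via the right-inverse axiom (the paper just performs the substitution $x\mapsto x/y$ before specializing $z:=y$ rather than after). No gaps.
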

\begin{proof}
We have the following equality from self-distributivity axiom,
\[((x/y)*z)*y) = ((x/y)*y)*(z*y) \addtag \]
Using the right inverse axiom,
\[((x/y)*y)*(z*y) = x * (z * y)\addtag \]
For $z = y$, the above equalities reduce to,
\[x * y = x * (y * y) \addtag \]
\end{proof}

\begin{lemma}\label{LR1-2}
\[\forall a\in R.\ \forall k \in \mathbb{Z}_+.\ \ a^{k+1} = (a^k * a^k)\]
\end{lemma}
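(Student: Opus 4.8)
The plan is to prove the identity by induction on $k$, but the argument becomes cleanest once one isolates the following auxiliary fact: in any rack, all positive powers of a fixed element $a$ act identically by right multiplication. Precisely, I would first establish the claim that for every $b \in R$ and every $j \in \mathbb{Z}_+$ one has $b * a^j = b * a$. Granting this, Lemma~\ref{LR1-2} is immediate: taking $b = a^k$ and $j = k$ gives $a^k * a^k = a^k * a$, and $a^k * a = a^{k+1}$ follows directly from the defining recursion of $pow$ (the clause $pow(k+1,a) = pow(k,a) * a$, which is the applicable branch since $k \in \mathbb{Z}_+$ forces $k+1 \geq 2$).

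To prove the auxiliary claim I would induct on $j$, keeping $b$ universally quantified. The base case $j = 1$ is the tautology $b * a = b * a$, since $a^1 = a$. For the inductive step, assume $b * a^j = b * a$ for every $b \in R$; the goal is $b * a^{j+1} = b * a$. Using the right-inverse axiom, write $b = (b/a) * a$, and expand $a^{j+1} = a^j * a$, so that
\[ b * a^{j+1} = \big( (b/a) * a \big) * \big( a^j * a \big). \]
Now apply self-distributivity in the form $(x * z) * (y * z) = (x * y) * z$ with $x = b/a$, $y = a^j$, $z = a$, obtaining $\big( (b/a) * a^j \big) * a$; the inductive hypothesis, applied to the element $b/a$, rewrites $(b/a) * a^j$ as $(b/a) * a$, and a final use of the right-inverse axiom, $(b/a) * a = b$, gives $b * a^{j+1} = b * a$. (Note that Lemma~\ref{LR1-1} is exactly the instance $b = x$, $a = y$, $j = 2$ of this claim; it could be used to streamline the $j = 2$ step but is not strictly required, as the induction above needs only the two rack axioms.)

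The one step needing genuine care is the quantifier structure of the auxiliary induction: the inductive hypothesis must be the full statement ``$b * a^j = b * a$ for every $b \in R$'', because the step invokes it not for $b$ itself but for $b/a$. This is also why a direct induction on $k$ in the statement of Lemma~\ref{LR1-2} stalls — one is quickly left needing $a^{k+1} * a^k = a^{k+1} * a$, which is precisely an instance of the auxiliary claim — so pulling that claim out first is what makes the proof go through. The only remaining bookkeeping is to confirm that every appeal to the recursion $pow(m,a) = pow(m-1,a) * a$ is made for $m \geq 2$, which holds throughout since all induction variables range over $\mathbb{Z}_+$.
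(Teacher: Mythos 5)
Your proof is correct, but it takes a genuinely different route from the paper, and your stated reason for doing so is mistaken. The paper proves the lemma by exactly the direct induction on $k$ that you say ``stalls'': assuming $a^{k} = a^{k-1} * a^{k-1}$, one writes $a^{k+1} = a^{k} * a = (a^{k-1} * a^{k-1}) * a$ and then applies self-distributivity \emph{forward} to expand this as $(a^{k-1} * a) * (a^{k-1} * a) = a^{k} * a^{k}$. The step you found yourself stuck on arises only if you try to manipulate the right-hand side $a^{k+1} * a^{k+1}$; substituting the inductive hypothesis into the \emph{left}-hand factor of $a^{k}*a$ and distributing makes the induction close in one line, using only the self-distributivity axiom. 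Your alternative --- first proving $b * a^{j} = b * a$ for all $b$ and all $j \in \mathbb{Z}_+$ by an induction that carries the universal quantifier on $b$, then specializing to $b = a^{k}$, $j = k$ --- is a valid and carefully executed argument (the quantifier bookkeeping and the use of the right-inverse axiom to pass to $b/a$ are all sound). What it buys is that your auxiliary claim is precisely the paper's Corollary \ref{LR1-2-2}, which the paper instead derives \emph{afterwards} from Lemmas \ref{LR1-1} and \ref{LR1-2}; so your route reverses the logical order and would make that corollary available for free, at the cost of a longer proof of the present lemma than is necessary.
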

\begin{proof}
We prove it by induction on $k$. For $k$ = 1, it follows trivially. For $k = n + 1$, We have
\[a^{n+1} = (a^n * a) \addtag \]
From induction hypothesis,
\[ (a^n * a) = (a^{n-1} * a^{n-1})*a \addtag \]
From self-distributivity of racks, 
\[ a^{n+1} =(a^{n-1} * a)* ( a^{n-1}*a) \addtag \]
which reduces to,
\[ a^{n+1} =(a^n)* (a^{n}) \addtag \]
\end{proof}

\begin{corollary}\label{LR1-2-2}
\[ \forall x, y  \in R.\ \forall k \in \mathbb{N}.\ \  x * y^{k+1} = x * y\]
\end{corollary}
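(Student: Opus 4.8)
The plan is to prove Corollary~\ref{LR1-2-2} by induction on $k$, bootstrapping from Lemma~\ref{LR1-1} (which handles the base case $k=1$, i.e.\ $x * y^2 = x * y$) and using Lemma~\ref{LR1-2} to rewrite higher powers $y^{k+1}$ in the form $(y^k * y^k)$. The key observation is that $x * (\text{something} * \text{something})$ is exactly the shape that Lemma~\ref{LR1-1} collapses, so the induction step should reduce $x * y^{k+1}$ to $x * y^k$, which the induction hypothesis then sends to $x*y$.

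First I would set up the base case: for $k=1$, the claim is $x * y^2 = x * y$, which is precisely Lemma~\ref{LR1-1} after noting $y^2 = y * y$ (from the definition of the power function, or directly from Lemma~\ref{LR1-2} with $k=1$). Next, assume $x * y^{k} = x * y$ for all $x$. For the inductive step I would write $y^{k+1} = (y^k * y^k)$ using Lemma~\ref{LR1-2}, so that
\[
x * y^{k+1} = x * (y^k * y^k).
\]
Now Lemma~\ref{LR1-1} applied with the element $y^k$ in place of $y$ gives $x * (y^k * y^k) = x * y^k$, and the induction hypothesis finishes the step: $x * y^k = x * y$. Hence $x * y^{k+1} = x * y$, completing the induction.

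I do not expect a serious obstacle here; the only mild subtlety is making sure the quantifier over $x$ is carried through the induction (the statement must be proved for all $x$ simultaneously so that Lemma~\ref{LR1-1}, which is itself universally quantified in its first slot, can be invoked at each stage), and making sure the indexing matches the power-function definition so that $y^{k+1}$ for $k \in \mathbb{N}$ genuinely falls under the "$n>1$" / Lemma~\ref{LR1-2} case rather than the degenerate $n=1$ case. Both are routine. The corollary is essentially the "absorption" principle that will later let cusp relations of the form $b = a^{n+1}$ interact cleanly with crossing relations, which is presumably why it is singled out here.
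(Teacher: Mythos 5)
Your proof is correct and matches the paper's argument: the paper likewise rewrites $y^{k+1}$ as $y^k * y^k$ via Lemma~\ref{LR1-2}, collapses it with Lemma~\ref{LR1-1}, and notes that this ``recursively reduces to the stated goal'' --- which is exactly the induction you have spelled out explicitly. No differences worth noting.
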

\begin{proof}
We have the following equality by lemma \ref{LR1-2}.
\[y^{k+1} = y^{k} * y^{k}\]
Which implies that 
\[x * y^{k+1} = x * (y^{k} * y^{k})\]
By applying lemma \ref{LR1-1} to R.H.S.,
\[x * y^{k+1} = x * y^{k}\]
which recursively reduces to the stated goal.
\end{proof}

\begin{lemma}\label{LR1-2.5}
\[\forall a\in R.\ \forall b\in R.\ \forall k \in \mathbb{N}.\ \ (a * b)^{k} = a^{k} * b\]
\end{lemma}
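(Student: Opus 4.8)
The plan is to prove the identity $(a*b)^k = a^k * b$ by induction on $k$, exactly paralleling the proof of Lemma \ref{LR1-2}. First I would handle the base case $k=1$, where both sides are literally $a*b$, so nothing is needed. For the inductive step I would assume $(a*b)^k = a^k * b$ and compute $(a*b)^{k+1}$. By the definition of the power function, $(a*b)^{k+1} = (a*b)^k * (a*b)$, and applying the induction hypothesis this equals $(a^k * b) * (a*b)$.

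The key move is then to rewrite $(a^k * b)*(a*b)$ using self-distributivity in the ``reverse'' direction: self-distributivity says $(x*z)*(y*z) = (x*y)*z$, so taking $x = a^k$, $y = a$, $z = b$ gives $(a^k * b)*(a*b) = (a^k * a)*b$. Since $a^k * a = a^{k+1}$ by the definition of $pow$, we obtain $(a*b)^{k+1} = a^{k+1} * b$, which closes the induction. I would write the computation as a short chain of displayed equalities, each annotated with the justification (definition of power, induction hypothesis, self-distributivity, definition of power again), being careful to keep each \addtag'd line on its own line inside the display and to avoid blank lines within any align or equation environment.

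I do not expect a serious obstacle here; the only thing to be careful about is the direction in which self-distributivity is applied — one needs the form $(x*z)*(y*z) = (x*y)*z$ rather than the left-to-right reading, but this is just the self-distributivity axiom read as an equation. A minor bookkeeping point is that the statement quantifies $k$ over $\mathbb{N}$; if $0 \in \mathbb{N}$ in the paper's convention then the case $k=0$ would read $(a*b)^0 = a^0 * b$, but since $pow$ is only defined on $\mathbb{Z}\setminus\{0,-1\}$ the intended range is really $k \geq 1$, matching the base case of the induction, so I would simply start the induction at $k=1$ as in Lemma \ref{LR1-2}.
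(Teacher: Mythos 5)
Your proposal is correct and is essentially identical to the paper's proof: induction on $k$ with base case $k=1$, followed by the definition of the power function, the induction hypothesis, and the self-distributivity axiom read as $(x*z)*(y*z)=(x*y)*z$ to collapse $(a^k*b)*(a*b)$ into $(a^k*a)*b=a^{k+1}*b$. Your remark about the direction of self-distributivity and the starting point of the induction matches what the paper implicitly does.
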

\begin{proof}
We prove by induction. The $k = 1$ case trivially follows. Let us assume the result holds true for $m$. We prove it for $k = m +1$.\\
From induction hypothesis,
\[(a * b)^{m} = (a^m * b)\]
By definition of power,
\[(a * b)^{m+1} = (a * b)^{m}*(a*b) \addtag \]
By applying induction hypothesis,
\[(a * b)^{m+1} = (a^{m} * b)*(a*b) \addtag \]
By using self-distributivity on R.H.S,
\[(a * b)^{m+1} = (a^{m} * a)*b \addtag \]
This simplifies to,
\[(a * b)^{m+1} = (a^{m+1}*b)\].
\end{proof}

\begin{lemma}
\[\forall a\in R.\ \forall k \in \mathbb{N}.\ \  {(a^{k+1})}^{k+1} = a^{2k+1}\]
\end{lemma}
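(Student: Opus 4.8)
The plan is to prove a slightly more general identity by induction and then specialise. Concretely, I would first establish that for every $a \in R$ and every $j \in \mathbb{Z}_+$,
\[(a^{j})^{k+1} = a^{j+k},\]
and then recover the stated claim by setting $j = k+1$, since $(k+1) + k = 2k+1$. Working with the free parameter $j$ (rather than inducting on $k$ directly) is what makes the induction go through, because only the base of the outer power changes from step to step.

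For the base case $j = 1$, the definition of the power function gives $a^{1} = a$, so $(a^{1})^{k+1} = a^{k+1} = a^{1+k}$ and there is nothing to prove. For the inductive step, suppose the identity holds for some $j \ge 1$. Using the definition of power, write $a^{j+1} = a^{j} * a$. Now apply Lemma \ref{LR1-2.5} with $a^{j}$ in place of $a$ and $a$ in place of $b$ (and exponent $k+1$) to obtain
\[(a^{j+1})^{k+1} = (a^{j} * a)^{k+1} = (a^{j})^{k+1} * a.\]
By the inductive hypothesis the right-hand side equals $a^{j+k} * a$, which is $a^{j+k+1}$ by the definition of power (legitimate since $j + k \ge 1$). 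This is precisely $a^{(j+1)+k}$, completing the induction; putting $j = k+1$ then yields ${(a^{k+1})}^{k+1} = a^{2k+1}$.

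The argument is essentially bookkeeping, so I do not expect a genuine obstacle; the only points that need care are the small-index behaviour of the power function (namely $a^{1} = a$ and $a^{m} * a = a^{m+1}$ for $m \ge 1$) and the correct substitution into Lemma \ref{LR1-2.5}. One could equally run the same induction using Corollary \ref{LR1-2-2}, replacing $a^{j+k} * a^{k+1}$ by $a^{j+k} * a$ in one step; either route works. If $\mathbb{N}$ is taken to include $0$, the case $k = 0$ is the quandle case and reduces to the trivial identity $a = a$.
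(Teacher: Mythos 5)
Your proof is correct, and it takes a genuinely different route from the paper's. The paper inducts directly on $k$ in the statement as given: writing $b=a^{m}$, it computes $(a^{m+1})^{m+1}=(b*a)^{m+1}=(b^{m+1})*a=\bigl((a^{m})^{m}*a^{m}\bigr)*a$, applies the inductive hypothesis $(a^{m})^{m}=a^{2m-1}$, and then needs Corollary \ref{LR1-2-2} to collapse $a^{2m-1}*a^{m}$ to $a^{2m-1}*a$. Because both the base and the exponent change with $k$, that extra absorption step is unavoidable there. You instead strengthen the claim to $(a^{j})^{k+1}=a^{j+k}$ and induct on the base exponent $j$ with $k$ frozen, so each step is a single application of Lemma \ref{LR1-2.5} followed by the definition of the power function; Corollary \ref{LR1-2-2} never enters the main line. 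What your approach buys is a cleaner induction and a strictly more general identity (which could be reused elsewhere, e.g.\ it immediately gives $(a^{n+1})^{n+1}=a^{2n+1}$ as the special case $j=k+1$ and also describes all other iterated powers of powers); what the paper's buys is that it proves exactly the stated lemma without introducing an auxiliary parameter. Both arguments rest on the same key engine, Lemma \ref{LR1-2.5}, and your handling of the small-index cases ($a^{1}=a$ and $a^{m}*a=a^{m+1}$ for $m\ge 1$) is consistent with the paper's definition of the power function.
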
 \label{LR1-3}
\begin{proof}
We prove by induction on $k$, For $k = 0$, this holds true trivially. We assume that the result holds for $k \leq m-1$, and proceed to show it for $k = m$. 

Let $b = a^m$,
Then we have the following equality,
\[(a^m*a)^{m+1} = (b*a)^{m+1} \addtag \]
Then from lemma \ref{LR1-2.5} we have,
\[(b*a)^{m+1} = (b^{m+1} * a) \addtag \]
We know that,
\[b^{m+1} = b^m * b\]
By substituting the above in 2.12 and by further substituting for $b$ we have,
\[(b^m * b) * a = ((({a^m})^m) * (a^m)) * a \addtag \]
We know from induction hypothesis that,
 \[({a^m})^m  = a^{2m-1} \addtag \]
Substitution the above in 2.13, 
 \[(b^m * b) * a = (a^{2m-1} * a^m) * a \addtag \]
By applying corollary \ref{LR1-2-2} on R.H.S,
 \[(b^m * b) * a = (a^{2m-1} * a) * a \addtag \]
which reduces to the desired equation.
 \end{proof}
 \begin{remark}
The above lemma indicates that the presentations of the Legendrian rack associated to Legendrian knots are sensitive to orientation, else by going through the same cusp again, we should get ${(a^{n+1})}^{n+1} = a$.
 \end{remark}

The following holds true in the Legendrian rack $LR_n$.

\begin{lemma}\label{LR1-2-3}
\[\forall a,b \in LR_n.\ (a^{2n+1} = b^{2n+1}) \longrightarrow (a = b) \]
\end{lemma}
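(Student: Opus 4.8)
The plan is to exploit the defining relation $x^{2n+2} = x$ of an $n$-Legendrian rack together with Lemma \ref{LR1-3}, which gives $(a^{k+1})^{k+1} = a^{2k+1}$ for all $k \in \mathbb{N}$. The key observation is that raising to the $(n+1)$-st power should be an \emph{invertible} operation on $LR_n$, with the inverse again being (a shift of) raising to the $(n+1)$-st power; the implication $a^{2n+1} = b^{2n+1} \Rightarrow a = b$ then follows because $a \mapsto a^{2n+1}$ factors through such an invertible map.

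Concretely, I would first record that in $LR_n$ the power map stabilizes in a controlled way: from Lemma \ref{LR1-2} we have $a^{k+1} = a^k * a^k$, and combined with Lemma \ref{LR1-1} (that $x*(y*y) = x*y$) and Corollary \ref{LR1-2-2} (that $x * y^{k+1} = x*y$), one gets that multiplying by high powers collapses to multiplying by the base. Using the axiom $a^{2n+2} = a$, I would show that the assignment $\varphi(a) := a^{n+1}$ satisfies $\varphi(\varphi(a)) = (a^{n+1})^{n+1} = a^{2n+1}$ by Lemma \ref{LR1-3} with $k = n$. Now apply $\varphi$ once more: $\varphi(a^{2n+1}) = \varphi(\varphi(\varphi(a)))$. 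On one hand this equals $\varphi(a)^{\,\cdot}$ composed appropriately; on the other, chasing the power arithmetic modulo the relation $a^{2n+2}=a$ should yield $(a^{2n+1})^{n+1} = a$ (equivalently $\varphi^3 = \mathrm{id}$, or $\varphi^2$ is a bijection). Hence $\varphi^2 : a \mapsto a^{2n+1}$ is a bijection on $LR_n$, so it is in particular injective, which is exactly the claimed implication.

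The main step to get right is the power-arithmetic: I must verify that $(a^{2n+1})^{n+1} = a$ holds in $LR_n$, i.e. that the three-fold iterate of $a \mapsto a^{n+1}$ is the identity. This reduces, via Lemma \ref{LR1-3} and the substitution $b = a^{n+1}$, $b^{n+1} = a^{2n+1}$, to computing $(a^{2n+1})^{n+1}$. One natural route: write $a^{2n+1} = a^{2(n+1)-1} = (a^{n+1})^{n+1}$ (Lemma \ref{LR1-3}), set $c = a^{n+1}$, so the target is $(c^{n+1})^{n+1} = c^{2n+1}$ (Lemma \ref{LR1-3} again) $= (a^{n+1})^{2n+1}$, and then use Lemma \ref{LR1-2.5} in the form $(a * b)^k = a^k * b$ iteratively, or a direct induction, to slide the exponent onto the base $a$ and collapse it using $a^{2n+2} = a$ and Corollary \ref{LR1-2-2}. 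The delicate point is bookkeeping the exponents so that every intermediate expression is a genuine power (exponent in $\mathbb{Z}\setminus\{0,-1\}$) and that the reduction mod the relation $a^{2n+2}=a$ is applied only to well-formed terms; this is where an automated prover presumably helped. Once $(a^{2n+1})^{n+1} = a$ is in hand, the lemma is immediate: from $a^{2n+1} = b^{2n+1}$ apply $(\,\cdot\,)^{n+1}$ to both sides to conclude $a = b$.
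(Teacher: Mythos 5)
Your overall plan---show that $a\mapsto a^{2n+1}$ is injective because it is a power of an invertible map $\varphi(a)=a^{n+1}$---is reasonable in spirit, but the identity you hang everything on, $(a^{2n+1})^{n+1}=a$ (i.e.\ $\varphi^3=\mathrm{id}$), is false in general. Using Lemma \ref{LR1-2.5} one shows by induction that $(a^j)^k=a^{j+k-1}$ (this is the generalization of Lemma \ref{LR1-3} you are implicitly invoking), so $(a^{2n+1})^{n+1}=a^{3n+1}$, and reducing the exponent by the period $2n+1$ coming from $a^{2n+2}=a$ gives $a^{3n+1}=a^{n}$, not $a$. Concretely, in the $2$-Legendrian rack $C_5$ (Theorem \ref{fin-L-rack}) one has $i^m=i+m-1 \pmod 5$, so $(i^{5})^{3}=i+6=i+1\neq i$. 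Thus the step you flag as ``the main step to get right'' is exactly the one that fails; the three-fold iterate of $\varphi$ is not the identity for $n\neq 1$. The strategy could be repaired---the same exponent arithmetic shows $\varphi^{j}(a)=a^{\,jn+1 \bmod (2n+1)}$, and since $\gcd(n,2n+1)=1$ one gets $\varphi^{2n+1}=\mathrm{id}$, so $\varphi$, and hence $\varphi^2$, is a bijection---but as written the proposal does not establish the lemma.

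For comparison, the paper's proof is much more direct and avoids all exponent bookkeeping: from $a^{2n+1}=b^{2n+1}$ and Corollary \ref{LR1-2-2} one gets $x*a=x*a^{2n+1}=x*b^{2n+1}=x*b$ for every $x$; taking $x=a^{2n+1}=b^{2n+1}$ yields $a^{2n+2}=b^{2n+2}$, and the $n$-Legendrian axiom gives $a=b$. You may want to adopt that route, or else prove the general identity $(a^j)^k=a^{j+k-1}$ carefully and argue via $\varphi^{2n+1}=\mathrm{id}$ rather than $\varphi^{3}=\mathrm{id}$.
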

\begin{proof}
Without loss of generality, we fix $a$ and $b$ and assume that $a^{2n+1} = b^{2n+1}$. Then we have following equality for all $x \in R$,
\[x * a^{2n+1} = x * b^{2n+1}\]
By applying corollary \ref{LR1-2-2} on both sides, we get
\[x * a  = x * b\]
For $x = a^{2n+1}$,
\[a^{2n+1} * a  = a^{2n+1} * b\]
By using the assumption,
\[a^{2n+1} * a  = b^{2n+1} * b\]
We thus obtain the following equality using the $n$-Legendrian rack axiom,
\[a = b\]
\end{proof}

 \begin{lemma} $\forall a \in LR_n.\ a^{n+1} = a^{-(n+2)}$
\end{lemma}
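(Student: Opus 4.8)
The plan is to trade the rack-theoretic identity for ordinary arithmetic of exponents, using the observation that the defining relation $x^{2n+2}=x$ can be rephrased so that the anchor of the negative tower, $x^{-2}=x/x$, becomes a \emph{positive} power of $x$. Concretely: from the $n$-Legendrian rack axiom we have $a^{2n+2}=a$, and since $2n+2>1$ the definition of $pow$ gives $a^{2n+2}=a^{2n+1}*a$. Applying the right-inverse axiom $(u*v)/v=u$ with $u=a^{2n+1}$ and $v=a$ then yields
\[a^{2n+1} \;=\; (a^{2n+1}*a)/a \;=\; a/a \;=\; a^{-2}.\]
This is the only place the Legendrian axiom enters; the rest is bookkeeping with the piecewise definition of $pow$ and uses nothing beyond the rack axioms.

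Next I would record two elementary ``shift'' identities that hold in any rack directly from the definition of $pow$ together with the right-inverse axiom: for every integer $m\ge 2$ one has $a^{m}/a = a^{m-1}$ (since $a^{m}=a^{m-1}*a$), and for every integer $j\ge 2$ one has $a^{-(j+1)} = a^{-j}/a$ (this is the $n<-2$ clause of the definition, with base instance $a^{-3}=a^{-2}/a=(a/a)/a$). Armed with these, I would prove by induction on $i$, for $0\le i\le n$, the statement
\[a^{-(i+2)} \;=\; a^{\,2n+1-i}.\]
The base case $i=0$ is precisely the identity obtained above. For the step, with $0\le i\le n-1$, use the negative shift to write $a^{-(i+3)} = a^{-(i+2)}/a$, substitute the induction hypothesis to get $a^{\,2n+1-i}/a$, and apply the positive shift, which is legitimate because $2n+1-i\ge n+2\ge 2$; this gives $a^{\,2n-i}=a^{\,2n+1-(i+1)}$, completing the induction. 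Finally, setting $i=n$ gives $a^{-(n+2)} = a^{\,2n+1-n} = a^{n+1}$, as desired.

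I do not expect a genuine obstacle here; the only delicate points are the boundary behaviour of the piecewise power function. In particular one should check the degenerate case $n=0$ (where the assertion reads $a^{1}=a^{-2}$ and both Step~1 and the induction collapse to the quandle relation $a*a=a$), the seam $m=2$ in the positive shift identity (where $a^{2}/a=(a*a)/a=a=a^{1}$), and that the exponent $2n+1-i$ never drops below $1$ for $0\le i\le n$ so that every positive power written down is legitimate. An essentially equivalent alternative is to set $b=a^{n+1}$, note $b^{n+1}=(a^{n+1})^{n+1}=a^{2n+1}=a^{-2}$ by Lemma~\ref{LR1-3} and Step~1, and then peel off factors of $b$ using Corollary~\ref{LR1-2-2}, which rewrites $b^{\,j}*b=b^{\,j}*a^{n+1}$ as $b^{\,j}*a$; this yields $a^{-(j+2)}=b^{\,n+1-j}$ for $0\le j\le n$, and at $j=n$ one gets $a^{-(n+2)}=b=a^{n+1}$.
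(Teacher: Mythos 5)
Your proof is correct and takes essentially the same route as the paper: the paper's one-line argument (``consecutive right multiplication of $a$ on both sides reduces the equation to $a^{2n+2}=a$'') is exactly your induction read in the opposite direction. You have merely supplied the bookkeeping — the two shift identities, the explicit induction, and the boundary cases of the piecewise $pow$ — that the paper leaves implicit.
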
\label{LR1-4}
\begin{proof}
We obtain the above by consecutive right multiplication of $a$ on both sides. The above equation reduces to
\[a^{2n+2} = a\]
\end{proof}

We restate and prove the main theorem 1, as follows: 

\begin{theorem*}
Each Legendrian rack $LR_n(D_L) $ associated to a front projection $D_L$ of an oriented Legendrian knot $L$ is independent of the choice of front projection of $L$. Therefore, $LR_n(D_L) $ is an invariant of $L$ up to isotopy via a family of oriented Legendrian knots.
\end{theorem*}\label{lracL_invariance}

\begin{proof}
Recall that any two front projections of $L$ are related by Legendrian Reidemeister moves. Hence, it is enough to prove the invariance of $LR_n(D_L) $ each Legendrian Reidemeister move. 

\begin{itemize}
\item\textbf{Legendrian Reidemeister Move 1:}
\end{itemize}
 Legendrian Reidemeister move 1 consists of four diagrams of which two are shown in Figure \ref{LR1_diag}. Rest of the cases can be derived by vertically rotating the two diagrams by 180 degrees. 
One can check that vertical rotation by 180 degrees does not alter the induced rack relations in the presentation of the Legendrian racks.  Thus it suffices to consider the first 2 cases.
\begin{figure}[h!]
\centering
\scalebox{.75}{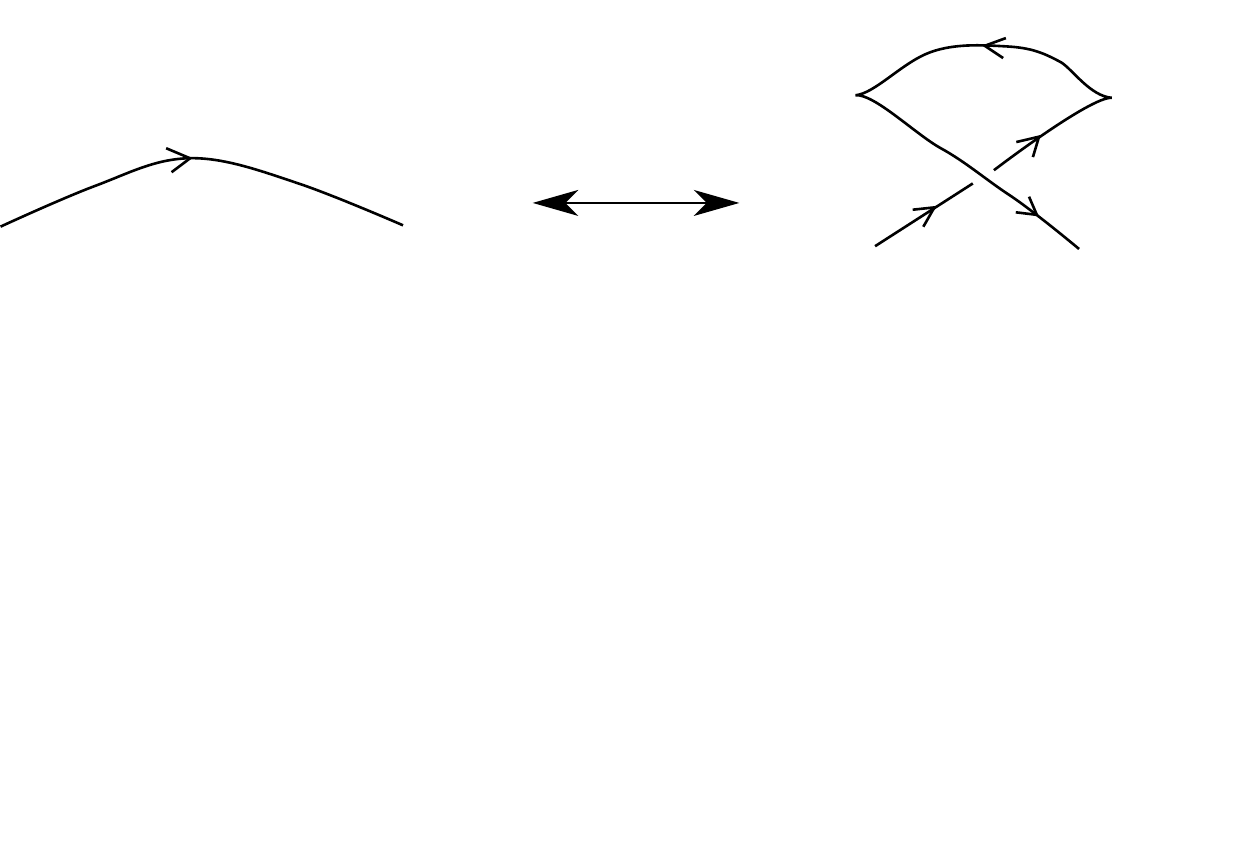}
\caption{Legendrian Reidemeister move of type 1}\label{LR1_diag}
\end{figure}
\begin{caseof}{\textbf{Case A:} }{
Consider the first figure, in Figure 6. We label the figure on the left as $D_L$ and on the right as $D'_L$. 
Invariance of the $n$-Legendrian rack under Legendrian Reidemeister move, is equivalent to showing that $LR_n(D_L) = LR_n(D'_L)$. The presentations corresponding to the front projection for $LR_n(D_L)$ and $LR_n(D'_L)$ can be described as follows:
\begin{enumerate}
\item $LR_n(D_L) = \langle G\cup \{a\}\ \ |\ \ R\cup  R_a\cup R'_a \rangle$ 
\item $LR_n(D'_L) = \langle\ G\cup \{a,b,c\}\ \ |\ \ R\cup R_a\cup R'_c\cup \{ b = (a*c)^n,\ c = b^{n+1}\} \rangle$
\end{enumerate}

$LR_n(D'_L)$ and $LR_n(D_L)$ differ in the following respect in terms of the presentation associated to the front projection:
\begin{enumerate}
\item $LR_n(D'_L)$ consists of two additional generators $b$ and $c$.
\item $R_a$ and $R'_a$ refer to the relations induced by crossings and cusps involving the strand corresponding to $a$, which occur before and after the point on which the Legendrian Reidemeister Move 1 is applied. In $LR_n(D'_L)$ $R'_a$ is replaced by $R'_c$, in which every instance of $a$ in $R'_a$ is replaced by $c$.  
\item The following additional relations are present in $LR_n(D'_L)$: 
\begin{itemize}
\item $b = (a*c)^{n+1}$.
\item $c = b^{n+1}$.
\end{itemize}
\end{enumerate}

The above implies that in $LR_n(D'_L)$,
 \[c = {\big( (a*c)^{n+1}\big) }^{n+1}\]
Which implies from lemma \ref{LR1-3}
 \[c = (a * c)^{2n+1}\]
Which further implies from lemma \ref{LR1-2.5}
 \[c = a^{2n+1} * c\]
From the definition of an $n$-Legendrian rack it follows, 
 \[c^{2n+2} = a^{2n+1} * c\]
From the right-inverse axiom of racks, 
 \[c^{2n+1} = a^{2n+1}\]
From lemma \ref{LR1-2-3},
 \[c = a\]
By substituting this in the equation for $b$,
 \[ b = (a * a)^{n+1}\]
 
From the use of lemma \ref{LR1-2.5},
\[b = a^{n+1} * a\]
\[ b = a^{n+2}\]

Thus the generating set of $LR_n(D'_L)$ is equal to the generating set of $LR_n(D_L)$. The two $n$-Legendrian racks are equal if the generating set of relations also are the same. Since $a$ is equal to $c$, the relations involving $c$ in $LR_n(D'_L)$, reduce to the corresponding relations in $LR_n(D_L)$. The generating set of relations in $LR_n(D'_L)$ after substituting for $b$ and $c$ in terms of $a$, contain the following additional relations along with $R \cup R_a \cup R'_a$.
\begin{itemize}
\item $a^{n+2} = (a*a)^{n+1}$.
\item $(a^{n+2})^{n+1} = a$.
\end{itemize}
It suffices to show that these relations are satisfied in $LR_n(D_L)$.
From lemma \ref{LR1-2.5} it follows that,
\[(a * a)^{n+1} = a^{n+1} * a\]
which implies that first of the above-mentioned additional relations. We prove the second relation on the following lines. From the definition of power it follows that,
\[({a^{n+2}})^{n+1} = (a^{n+1}*a)^{n+1}\]
By applying lemma \ref{LR1-2.5} to R.H.S,
\[(({a^{n+2})}^{n+1}) = ({a^{n+1}})^{n+1}*a \addtag \]
From lemma \ref{LR1-3},
\[(({a^{n+1}})^{n+1}) = a^{2n+1}\]
By substituting in 2.17,
\[(({a^{n+2}})^{n+1})  = a^{2n+2}\]
By applying the $n$-Legendrian rack axiom,
 \[(({a^{n+2}})^{n+1})  = a\]

Thus the Legendrian Reidemeister Move of type (1) under consideration preserves the associated $n$-Legendrian rack.}\\

{\textbf{Case B:}} {Now we consider the Legendrian Reidemeister move of type (1), with the opposite orientation, as described in the Figure 6. Analogous to the above case, the presentation associated to the front projection of the Legendrian rack $LR_n(D'_L)$ contains additional generators $b$ and $c$, relations obtained by selectively replacing $a$ by $c$ and the following additional relations:
\begin{itemize}
\item $b = a^{n+1}$
\item $c/a = b^{n+1}$
\end{itemize}
The second relation above can be rewritten using the right inverse axiom of racks,
\[ c = (b^{n+1})*a\]
By substituting for $b$,
\[c = {(a^{n+1})}^{n+1}*a\]
which implies from lemma \ref{LR1-3} that,
\[c = a^{2n+1}*a\]
From the definition of $n$-Legendrian rack,
\[c = a\]
Since both $b$ and $c$ can written in terms of powers of $a$, the generator set in $LR_n(D'_L)$ is the same. Further since $a = c$, the relations involving $c$ are the same as the corresponding relations involving $a$ in $LR_n(D_L)$. The additional relations satisfied in $LR_n(D'_L)$, after substituting in terms of $a$ reduces to:
\begin{itemize}
\item $a/a = (a^{n+1})^{n+1}$.
\end{itemize}
This is equivalent to proving that,
\[a = (a^{n+1})^{n+1}* a\]
This holds true in $LR_n(D_L)$, by using the lemma \ref{LR1-3} and the $n$-Legendrian rack axiom. 

One can check that any other variant of the Legendrian Reidemeister move of type I reduces to the one of the above cases, in terms of correspondence with racks. This $n$-Legendrian racks corresponding to oriented Legendrian knots are invariant under Legendrian Reidemeister moves of type 1.}
\end{caseof}

\begin{itemize}
\item \textbf{Legendrian Reidemeister Move 2.}
\end{itemize}
Legendrian Reidemeister moves of type 2 consist of the four moves listed in figure 7.

\begin{figure} \label{Fig-LR2}
\centering
\scalebox{0.6}{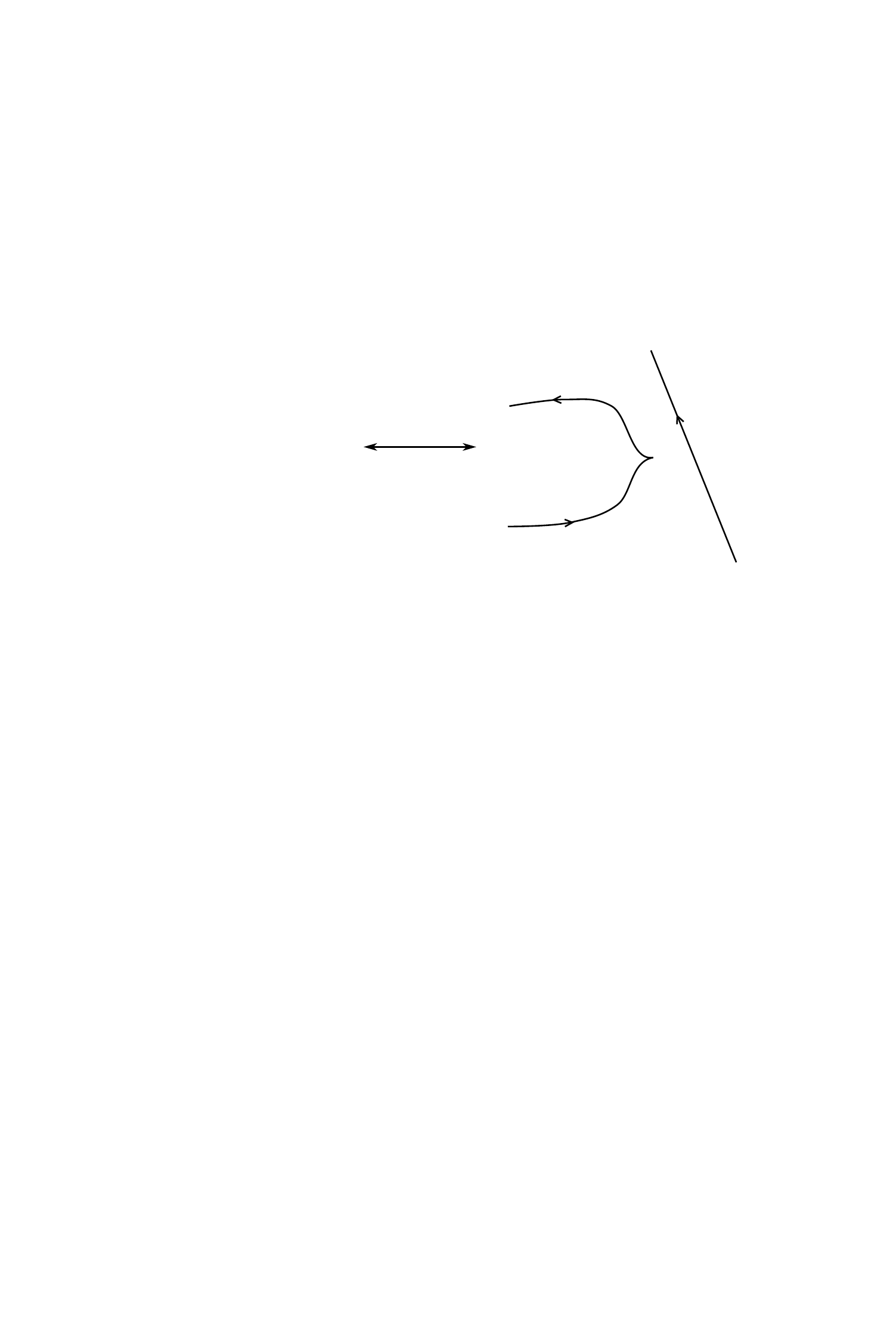}
\caption{Legendrian Reidemeister move of type 2}
\end{figure} 

Analogous to the earlier case, we use the terms $D_L$ and $D'_L$ to denote the diagrams on left and right, in each of the above listed diagrams. Proofs for each of the cases are listed below:\\
\begin{caseof}
{\textbf{Case A:}}

 {Presentations corresponding to the front projection for $LR_n(D_L)$ and $LR_n(D'_L)$ differ in the following respect:
\begin{enumerate}
\item The generating sets are the same.
\item $LR_n(D_L)$ contains the relation $b = a^{n+1}$. In $LR_n(D'_L)$, the relation is replaced by the relation $b * c = (a * c)^n$.
\end{enumerate}
Showing equivalence of the two $n$-Legendrian racks above is equivalent to proving the following:
\begin{enumerate}
\item $b = a^{n+1}$ holds in $LR_n(D'_L)$.
\item $b * c = (a * c)^{n+1}$ holds in $LR_n(D_L)$.
\end{enumerate}

In $LR_n(D'_L)$, 
 \[b * c = (a * c)^{n+1}\]
 It follows from lemma \ref{LR1-2.5} that,
 \[(b * c) = a^{n+1} * c\]
 From the right inverse axiom, 
 \[b = a^{n+1}\]
 which implies that the first relation holds in $LR_n(D'_L)$.
 
 In $LR_n(D_L)$, 
 \[b  = a^{n+1}\]
 By mutiplying on both sides by $c$,
\[b *c =   a^{n+1}*c\]
From lemma \ref{LR1-2.5} it follows that,
\[b *  c = (a * c)^{n+1}\]
Thus the second relation holds in $LR_n(D_L)$ and the the two racks are isomorphic.}\\

{\textbf{Case B: }}{The second move in the Legendrian Reidemeister move of type 2 reduces to the above case, by substituting $b'$ as $b/c$ and $a'$ and $a/c$.}\\

{\textbf{Case C:}} {The third move in the Legendrian Reidemeister move of type 2 affects the presentations of $LR_n(D_L)$ and $LR_n(D'_L)$ in the following manner:
 \begin{enumerate}
 \item Both $LR_n(D_L)$ and $LR_n(D'_L)$ contain the relation $b = a^{n+1}$.
 \item In $LR_n(D'_L)$, some of the relations involving $c$ in $LR_n(D_L)$ are replaced by relations obtained by substituting $(c/a)*b$. These are precisely the relations involving $c$ that occur after the segment which is pushed under the cusp.
 \end{enumerate}

It suffices to show that $(c/a)*b = c$  in $LR_n(D'_L)$ to show isomorphism of $LR_n(D_L)$ and $LR_n(D'_L)$.  By substituting for $b$ in $(c/a)*b$, we obtain
\[(c/a)*b = (c/a) * a^{n+1}\]
Which implies from lemma \ref{LR1-2-2},
\[(c/a) * a^{n+1}= (c/a) * a\]
From the right inverse axiom it follows that,
\[(c/a) * a = c\]
Thus it follows that $(c/a)*b$ equals $c$ in $LR_n(D'_L)$ and the isomorphism of the two racks.}\\

{\textbf{Case D: }}{This Legendrian Reidemeister move is analogous to the earlier one, except that we replace $(c*a)/b$ with $(c*b)/a$ in step above. The proof here consists of proving that $(c*b)/a$ equals $c$ in $LR_n(D'_L)$.
\[(c * b)/a = (c * a^{n+1})/a\]
By applying lemma \ref{LR1-2-2},
\[(c * b)/a = (c * a)/a \]
Existence of right inverse tells us that the right side side is the same as $c$. Thus we obtain invariance of $n$-Legendrian rack under Legendrian Reidemeister moves of type 2.}
\end{caseof}

\begin{itemize}
\item \textbf{Legendrian Reidemeister move 3}
\end{itemize}
This move is the same as in the case of topological knots. The invariance under this move follows from the associativity axiom of racks, as in the case of knots.

This it follows that an $n$-Legendrian rack is an invariant of oriented Legendrian knots.
\end{proof}
\begin{remark}
As a consequence of the above theorem, the notation $$LR_n(L) : = LR_n(D_L) $$
makes sense. Simiarly, we can use the notation $\mathcal{LR}(L)$ for the family of Legendrian racks. Thus, the family $ \mathcal{LR}(L) = \big(LR_n (L)\big)_{n \in \mathbb{N}}$ of Legendrian
racks is an invariant of $L$ upto isotopy through oriented Legendrian knots.
\end{remark}

\section{Legendrian Racks and Legendran Unknots}\label{Leg_unknot}
 
This section contains some basic results about distinguishability of Legendrian racks associated to Legendrian knots. We begin by showing that it distinguishes (topological) knot type. 
\begin{theorem} If the Legendrian knots $L_1$ and $L_2$ correspond to topologically distint knots,
\[\forall n \in \mathbb{N}.\ \ LR_n(L_1) \neq LR_n (L_2)\]
\end{theorem}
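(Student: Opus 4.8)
The plan is to reduce the Legendrian statement to the classical fact that the fundamental quandle (or knot quandle) distinguishes knot type, by showing that the $0$-Legendrian rack $LR_0(L)$ recovers the classical knot quandle of the underlying topological knot $L$. Recall from the remark after Definition \ref{LracL_defn} that a $0$-Legendrian rack is precisely a quandle: the axiom $x^{2n+2}=x$ with $n=0$ reads $x^2=x$, i.e. $x*x=x$, which is idempotency. So it suffices to identify $LR_0(D_L)$ with Joyce's knot quandle of the topological knot type of $L$.

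First I would examine the presentation of $LR_0(D_L)$ coming from a front projection $D_L$: it has one generator per strand (where a strand now terminates at crossings \emph{and} cusps), crossing relations identical to the usual quandle crossing relations, and cusp relations of the form $b = a^{n+1}$, which for $n=0$ become simply $b = a$. Thus every cusp relation just identifies the two strands meeting at that cusp, and after performing these identifications the presentation collapses to exactly the Wirtinger-style quandle presentation associated to the front diagram $D_L$ viewed as an ordinary knot diagram (a front projection, after smoothing the cusps, is a diagram of the underlying topological knot, and the only crossings are the standard ones). Hence $LR_0(D_L) \cong Q(K)$, the fundamental quandle of the topological knot $K$ underlying $L$.

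Next, since $L_1$ and $L_2$ are topologically distinct, their underlying knots $K_1 \neq K_2$, and by Joyce's theorem \cite{Joyce} the knot quandle is a complete invariant of (unoriented, or with appropriate care, oriented) knots up to the relevant equivalence, so $Q(K_1) \not\cong Q(K_2)$. Therefore $LR_0(L_1) = LR_0(D_{L_1}) \cong Q(K_1) \not\cong Q(K_2) \cong LR_0(D_{L_2}) = LR_0(L_2)$, which shows $LR_0(L_1) \neq LR_0(L_2)$ and a fortiori $LR_n(L_1) \neq LR_n(L_2)$ fails to follow for all $n$ directly — so I would instead argue for general $n$ by a similar collapsing argument, or simply note that the statement as phrased is strongest when interpreted as: for each $n$ the racks differ, and the $n=0$ case already suffices to witness distinctness of the \emph{family}. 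To get it for every individual $n$, I would observe that each $LR_n(D_L)$ surjects onto $LR_0(D_L)$ (imposing idempotency $x*x=x$ on $LR_n$ forces $x^{n+1}=x$, hence the cusp relations $b=a^{n+1}$ become $b=a$), so $LR_n(L_1)\cong LR_n(L_2)$ would force $LR_0(L_1)\cong LR_0(L_2)$, a contradiction.

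The main obstacle I anticipate is making the quotient/collapsing argument fully rigorous: one must check that imposing $x*x = x$ on the $n$-Legendrian rack presentation of $D_L$ yields \emph{exactly} the knot quandle presentation, with no spurious extra relations and no loss of relations — in particular that the crossing relations of the front diagram, once cusp-identifications are made, coincide with the Wirtinger quandle relations of the topological knot, and that distinct topological diagrams of the same knot give the same quandle (which is classical). A secondary subtlety is orientation bookkeeping: the Legendrian racks are built from oriented diagrams and Lemma \ref{LR1-3} shows the presentation is genuinely orientation-sensitive, so I would need to confirm that passing to $n=0$ washes this out and lands on the usual (oriented) knot quandle, where Joyce's completeness result applies.
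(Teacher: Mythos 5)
Your proposal takes essentially the same route as the paper: for each $n$, impose the idempotency relation $x*x=x$ on the presentation of $LR_n(L_i)$, note that the cusp relations $b=a^{n+1}$ then collapse to $b=a$ so the quotient is the classical knot quandle, and invoke Joyce's completeness theorem to conclude that the quandles, and hence the racks, must differ. The paper's proof is a terser version of exactly this quotient argument (and it shares the same mild imprecision you flag, namely that the knot quandle is complete only up to mirror image and orientation reversal), so your more careful bookkeeping of the collapsing step and the surjection $LR_n(D_L)\twoheadrightarrow LR_0(D_L)$ only adds rigor to what is already there.
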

\begin{proof}
We begin by noting that quandles are obtained from Legendrian racks by adding the axiom $\forall x.\ x^2 = x$. For any given value of $n \in \mathbb{N}$, consider the racks associated to Legendrian knot by adding $R_i = \{ x^2| x\in LR_n(L_i)\}$ to the set of relations in finite presentation of $LR_n(L_i)$. The resultant racks are quandles $Q(L_i)$ associated to $L_i$. Since quandles are a complete invariant upto mirror-isotopy and orientation, 
\[Q(L_1) \neq Q(L_2)\]
This implies that $LR_n(L_1) \neq LR_n(L_2)$. Since $n$ was arbitrary, the $n$-Legendrian racks are distinct for every choice of $n\in \mathbb{N}$.
\end{proof}

The indexed family of Legendrian racks is not a complete invariant. It can illustrated from the following fact:
See Figure \ref{same-L-rack}
\begin{figure}
\centering
\scalebox{0.5}{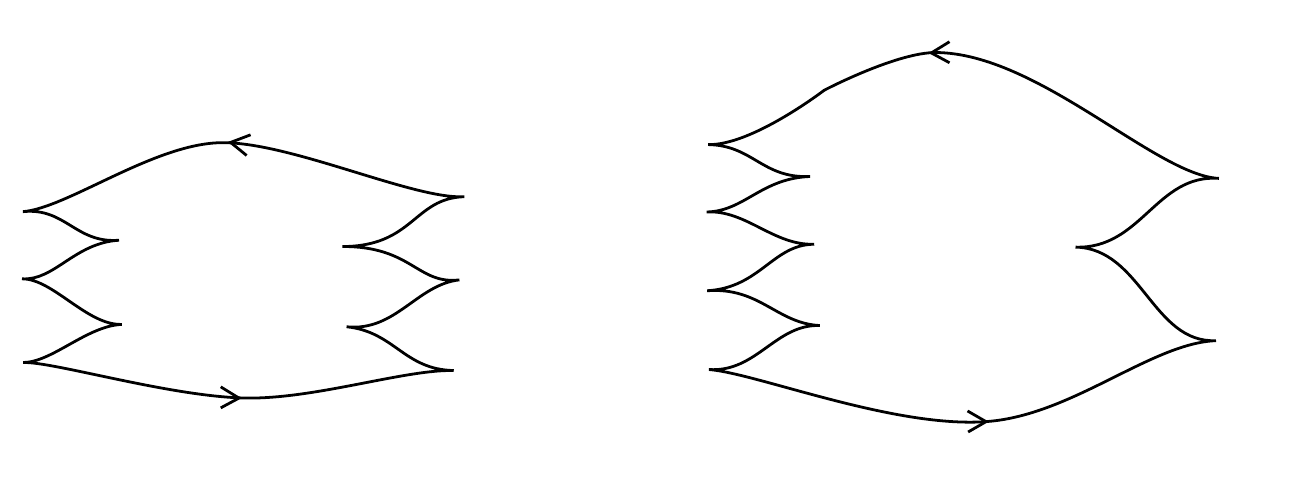}
\caption{Legendrian Knots having isomorphic Legendrian racks. Both the diagrams give rise to the presentation
$ \langle a_0, a_1, a_3, \ldots, a_9 | a_{i+1} = a_i^n \text{ for all } 0 \leq i \leq 9   \rangle $}\label{same-L-rack}
\end{figure}

\begin{theorem} If two topologically trivial Legendrian knots $L_1$ and $L_2$ have the same Thurston-Bennequin invariant, then the corresponding $n$-Legendrian racks are isomorphic, $\forall n \in \mathbb{N}$.
\end{theorem}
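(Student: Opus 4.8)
The plan is to push both knots into a common normal form whose associated presentation can be written out explicitly, and then to read off that this presentation depends only on the number of cusps, hence only on $tb$. Two inputs drive the argument. First, by Main Theorem 1 the rack $LR_n(L)$ depends only on the Legendrian isotopy class of $L$, so each $L_i$ may be replaced by any Legendrian isotopic representative. Second, by the Eliashberg--Fraser classification of Legendrian unknots \cite{Eliash}, a topologically trivial Legendrian knot $L$ with $tb(L) = -m$ (and $m \ge 1$, since $tb \le -1$ for the unknot) is Legendrian isotopic to a standard model obtained from the maximal-$tb$ unknot by $m-1$ stabilizations. Whatever the types (positive or negative) of those stabilizations --- and this is exactly what records the rotation number --- each such standard model admits a \emph{crossing-free} front projection.

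Next I would analyze a crossing-free front $D$ of the unknot directly. Since its only singularities are cusps, a front $D$ with $c$ cusps is a single cyclic chain of $c$ strands $a_0, a_1, \dots, a_{c-1}$, and moreover $tb = \mathrm{writhe}(D) - \tfrac12 c = -\tfrac12 c$, so $c = 2m$. As $D$ has no crossings, the presentation of $LR_n(D)$ is assembled entirely from the cusp relations, each of which has the uniform shape $b = a^{n+1}$ with $a$ (resp.\ $b$) the strand entering (resp.\ leaving) the cusp along the orientation (Figure \ref{cusp_relation}). Reading the relations once around the chain gives
\[
LR_n(D) \;=\; \big\langle\, a_0,\dots,a_{2m-1} \;\big|\; a_{(i+1)\bmod 2m} = a_i^{\,n+1},\ 0 \le i \le 2m-1 \,\big\rangle
\]
as an $n$-Legendrian rack. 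This presentation depends only on $m$ and $n$ and is completely insensitive to the rotation number; in particular two such fronts with the same number of cusps yield isomorphic racks. Hence, choosing crossing-free normal forms $D_1$ and $D_2$ for $L_1$ and $L_2$, we obtain $LR_n(L_1) \cong LR_n(D_1) \cong LR_n(D_2) \cong LR_n(L_2)$ for every $n \in \mathbb{N}$.

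The one point I expect to need care is the claim that \emph{every} cusp of a crossing-free unknot front contributes the same relation $b = a^{n+1}$, i.e.\ that this relation is insensitive to whether the cusp is a left or right cusp and to the up/down orientation behaviour through it. This is precisely what forces the exponent accumulated on traversing $D$ once to equal $1 + 2mn$ regardless of how the up- and down-cusps are distributed, and hence what removes the rotation-number dependence. The uniformity is built into the definition of the cusp relation --- the two pictures in Figure \ref{cusp_relation} already exhibit two cusp types with the same relation --- but it is worth checking explicitly that, up to the allowed rotations, these exhaust the cusp types occurring in a crossing-free front; the remaining ingredient, existence of a crossing-free front with $2m$ cusps for every Legendrian unknot of Thurston--Bennequin number $-m$, is supplied by the Eliashberg--Fraser normal form.
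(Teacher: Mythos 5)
Your proposal is correct and follows essentially the same route as the paper: invoke Main Theorem 1 plus the Eliashberg--Fraser classification to pass to a crossing-free front, observe that equal $tb$ forces an equal number of cusps and hence of strands, and note that the resulting cyclic presentation $\langle a_0,\dots,a_{2m-1} \mid a_{(i+1)\bmod 2m}=a_i^{n+1}\rangle$ depends only on $m$ and $n$. Your version is in fact more careful than the paper's (which states the cusp count slightly loosely and does not write the presentation out), and your flagged worry about cusp types is resolved exactly as you suspect, by the uniformity of the cusp relation in the definition.
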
 
\begin{proof}
It follows from Eliashberg-Fraser's result \cite{Eliash}
, that any Legendrian knot corresponding to the topological unknot has a diagram (in front projection) with no crossings. In such a projection, the writhe equals $0$ and the Thurston-Bennequin number equals the additive inverse of number of cusps. Since \[tb(L_1) = tb(L_2)\]
They have the same number of cusps, in the front projection with no crossings. As a consequence, they have the same number of strands. One can see that the Legendrian rack presentations corresponding to such strands are the same. It thus follows that they are isomorphic.
\end{proof}

\begin{remark} Even though for Legendrian knots corresponding to topological unknots, the racks are independent of the orientation, it is unclear whether such a result would hold true in general. It can be seen that by reversing the orientation, 
\end{remark}

Next few theorems illustrate that these racks  detect some of geometric properties of Legendrian knots, which cannot be distinguished by the knot quandle. We consider the case of topologically trivial Legendrian unknots to illustrate this fact. 

\begin{notation} We refer to a  diagram of a topologically trivial Legendrian knot with no crossings as the minimal diagram. We refer to the topologically trivial Legendrian knot  in Figure 9 as the minimal Legendrian unknot.For  a topologically trivial Legendrian knot $L$, the number of strands in the minimal diagram is unique which denote by $s_{min}(L)$. One may observe that 
 \[s_{min}(L) = 2\mid tb(L)\mid \]  
 \end{notation}
 
\begin{figure}
\centering
\scalebox{.75}{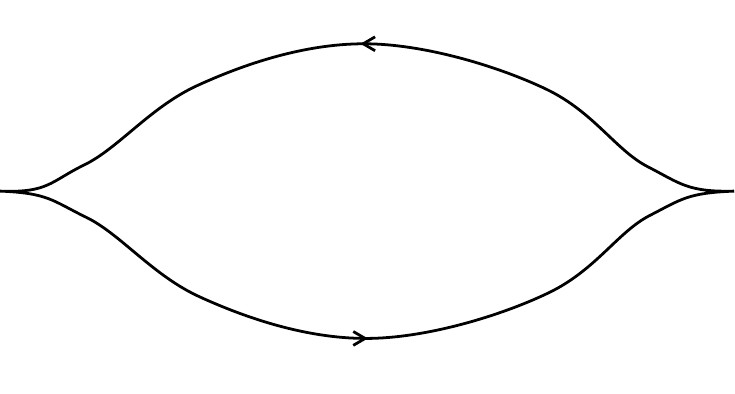}
\caption{minimal Legendrian unknot}\label{std_unknot}
\end{figure}

\begin{theorem} Minimal Legendrian unknot has a trivial $n$-Legendrian rack, for every $n \in \mathbb{N}$.
\end{theorem}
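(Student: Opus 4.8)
The plan is to read off the Legendrian rack presentation from the minimal diagram in Figure~\ref{std_unknot} and then collapse it using the structural lemmas already proved. The minimal Legendrian unknot has a crossing-free front projection with exactly two strands $a_1,a_2$ (as labelled in the figure) separated by its two cusps, so the cusp relations read along the orientation give
\[
LR_n(L)=\langle\, a_1,a_2 \;\mid\; a_2=a_1^{\,n+1},\ \ a_1=a_2^{\,n+1}\,\rangle .
\]
So the first step is simply to record this presentation.

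The second step is to eliminate $a_2$ and collapse the remaining generator. Substituting the first relation into the second gives $a_1=(a_1^{\,n+1})^{\,n+1}$, and Lemma~\ref{LR1-3} (with $k=n$) rewrites the right-hand side as $a_1^{\,2n+1}$, so $a_1=a_1^{\,2n+1}$. Combining this with the defining axiom $a_1^{\,2n+2}=a_1$ and the identity $a_1^{\,2n+2}=a_1^{\,2n+1}*a_1$ coming from the definition of the power function, we obtain $a_1^{\,2}=a_1^{\,2n+1}*a_1=a_1*a_1=a_1$; that is, $a_1$ is idempotent. A short induction (or Corollary~\ref{LR1-2-2}) then gives $a_1^{\,k}=a_1$ for all $k\ge 1$, and in particular $a_2=a_1^{\,n+1}=a_1$. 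Hence $LR_n(L)$ is generated by the single idempotent element $a_1$.

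The final step is to observe that a rack generated by one idempotent element is the one-element rack, i.e.\ the trivial $n$-Legendrian rack. Indeed, right multiplication by $a_1$ is a bijection, and both $a_1*a_1=a_1$ and $(a_1/a_1)*a_1=a_1$ hold, so $a_1/a_1=a_1$; thus every expression built from $a_1$ via $*$ and $/$ equals $a_1$, and the underlying set is $\{a_1\}$. (Equivalently, adjoining $x*x=x$ to the free rack on one generator produces the free quandle on one generator, which is a point.) The one-element rack trivially satisfies $x^{2n+2}=x$ for every $n$, so it is the trivial $n$-Legendrian rack, which completes the proof.

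I expect the only slightly delicate point to be this last step — making precise what ``the rack presented by $\langle a\mid a*a=a\rangle$'' is and why no further elements are forced — since everything preceding it is a direct application of Lemma~\ref{LR1-3} and the $n$-Legendrian rack axiom. If one prefers to bypass presentations entirely, an alternative is to exhibit the obvious surjection of $LR_n(L)$ onto the one-element rack and note that the computation above shows it is injective as well.
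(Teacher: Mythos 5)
Your proposal is correct and follows essentially the same route as the paper: read off the two-generator, two-relation presentation, substitute to get $a=(a^{n+1})^{n+1}=a^{2n+1}$ via Lemma~\ref{LR1-3}, combine with the axiom $a^{2n+2}=a$ and right-cancellation to conclude $a*a=a$, and hence triviality. The only difference is that you spell out the final step (why a rack presented by one idempotent generator collapses to a point, including $a/a=a$), which the paper asserts without elaboration; that added care is harmless and arguably an improvement.
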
 \label{minimal-trivial}
\begin{proof}
The $n$-Legendrian rack corresponding to the Legendrian unknot in question has the following presentation:
\begin{itemize}
\item Generators: $a$, $b$.
\item Relations: $b = a^{n+1}$, $a = b^{n+1}$.
\end{itemize}
The above relations imply that:
\[a = ({a^{n+1}})^{n+1}\]
By subtituting on R.H.S from  \ref{LR1-3},
\[ a = a^{2n+1}\]
By substituting on L.H.S from the definition of $n$-Legendrian rack,
\[a^{2n+2} = a^{2n+1}\]
Since right inverse exists in a rack, it follows that:
\[a^{2n+1} = a^{2n}\]
By consecutive application of right inverse axiom, it follows that:
\[a * a = a\]
This implies that every $n$-Legendrian rack is the trivial quandle.
\end{proof}

\begin{lemma}
Let $L$ be a topologically trivial Legendrian knot, such $s_{min}(L)$  is divisible by a prime $p$ greater than $2$.  $LR_n(L)$ is non-trivial, for some $n\in \mathbb{N}$.
\end{lemma}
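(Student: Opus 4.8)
The plan is to pin down the presentation of $LR_n(L)$ exactly and then exhibit, for one cleverly chosen value of $n$, a surjection onto the non-trivial finite $n$-Legendrian rack $C_p$.

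First I would use Eliashberg--Fraser \cite{Eliash} (exactly as in the two preceding theorems): $L$ has a front diagram with no crossings, so its strands are precisely the arcs between consecutive cusps. Traversing this diagram along the orientation, one meets the $s := s_{min}(L)$ cusps cyclically, each contributing one relation ``(next strand) $=$ (previous strand)$^{\,n+1}$'' --- every cusp is read in the forward direction, which is the orientation-sensitivity already flagged in the remark after Lemma \ref{LR1-3}. Hence
\[ LR_n(L) \;=\; \big\langle\, a_0,\dots,a_{s-1} \ \big|\ a_{i+1} = a_i^{\,n+1}\ \text{for } i \in \mathbb{Z}/s \,\big\rangle \]
as an $n$-Legendrian rack, the same cyclic chain for every $n$. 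Checking that the minimal diagram really produces this cyclic presentation (and not something more complicated) is the first place where care is needed, though it is essentially bookkeeping.

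Next I would choose $n := (p-1)/2 \in \mathbb{N}$, so that $2n+1 = p$ and, by Theorem \ref{fin-L-rack}, $C_p$ is an $n$-Legendrian rack. In $C_p$ the operation is $i * j = i+1 \pmod p$, so $i^{\,m} = i + (m-1) \pmod p$ and in particular $i^{\,n+1} = i + n \pmod p$. I would then define $f(a_i) := i\,n \bmod p$ and verify that it respects the defining relations: $f(a_{i+1}) = (i+1)n = in + n = f(a_i)^{\,n+1}$ for $0 \le i \le s-2$, and for the closing relation $f(a_{s-1})^{\,n+1} = sn \equiv 0 = f(a_0) \pmod p$ since $p \mid s_{min}(L) = s$. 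Because $C_p$ is an $n$-Legendrian rack, $f$ extends to a homomorphism $\tilde f : LR_n(L) \to C_p$ by the universal property of presentations in the variety of $n$-Legendrian racks; this step relies essentially on Theorem \ref{fin-L-rack} (that $C_p$ satisfies $x^{2n+2}=x$).

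Finally I would note that $\operatorname{im}\tilde f$ is a subrack of $C_p$ containing $f(a_0) = 0$, and the only subrack of $C_p$ containing $0$ is all of $C_p$ (apply $*\,0$ repeatedly to reach $1,2,\dots,p-1$); hence $\tilde f$ is onto a rack with $p \ge 3$ elements, so $LR_n(L)$ is non-trivial for this $n$. The main obstacle is really the conjunction of the two ``care'' points above --- getting the minimal-diagram presentation right and confirming that the arithmetic assignment descends to the \emph{presented} $n$-Legendrian rack --- after which the number theory is immediate from the choice $n = (p-1)/2$ together with $p \mid s_{min}(L)$.
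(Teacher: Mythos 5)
Your proof is correct and follows essentially the same route as the paper's: both pin down the cyclic presentation coming from the crossing-free minimal diagram, pick $n$ with $2n+1$ equal to an odd prime power dividing $s_{min}(L)$, and build a surjective homomorphism onto the cyclic rack $C_{2n+1}$ via the shift-by-$n$ arithmetic. The only cosmetic difference is that you map onto $C_p$ while the paper uses $C_{p^k}$ (explicitly remarking that $C_p$ suffices for this lemma and that $p^k$ is chosen with Main Theorem 2 in mind), and you write the presentation with $s$ generators rather than the paper's single generator with the relation $T^{s_{min}(L)}(a)=a$.
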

\begin{proof}

We prove the above theorem by constructing a surjective rack homomorphism from $LR_n$ to the finite Legendrian rack $C_{p^k}$.  Observe that $p^k$ is odd, which implies that there exists an  $n$ such that $p^k = 2n+1$. Let $T$ be a function on $LR_n(L)$ such that:
\[\forall x.\ T(x) = x^{n+1}\]

$LR_n(L)$ can be described by the following presentation:
\[\langle a\ |\ T^{s_{min}(L)} (a) = a \rangle\]

Consider the partial map $\phi$ from $LR_n(L)$ to $C_{p^k}$, such that:
\[\phi(a) = 1\].

From theorem \ref{fin-L-rack}, we know that $C_{p^k}$ is an $n$-Legendrian rack.  The map $\phi$ extends to a well defined homomorphism if and only if:
\[ \phi \left(T^{s_{min}(L)}(a) \right) = 1\]

Observe that for any $i \in C_{p^k}$ and $j \in \mathbb{N}$, 
\[i^{j+1} = (i + j) \ (mod\ p^k)\]
which further implies that,
\[\phi\left(T(a)\right) = (i + n)\ (mod\ p^k)\]
Which generalises to,
\[\phi\left( T^j(a) \right) = (i + nj)\ (mod\ p^k)\]
Thus it follows that,
\[\phi\left( T^{s_{min}(L)}(a) \right) = (1 + ns_{min}(L))\ (mod\ p^k)\]
Since $p_L\divides s_{min}(L)$, it follows that
\[T^{s_{min}(L)}(a) = 1\]
To prove surjectivity, consider the set $\{ \phi(T^j(a))\}_{j < s_{min}(L)}$ in the range of $\phi$. From above we note that it corresponds to the orbit of $i$ by applying the constant shift $n$. Since $n$ and $p^k(=2n+1)$ are co-prime, the orbit spans the rack. This implies that $\phi$ is a surjective homomorphism. 
\end{proof}

One may note that considering a surjective map to $C_p$ suffices for the above case. The $C_{p^k}$ was chosen with a view of the proof of the following result. 

\begin{theorem**}
Let $L_1$ and $L_2$ be two Legendrian unknots. If there exists an odd prime $p$ and a positive integer $k \in \mathbb{N}$, such that:
\begin{itemize}
\item $p^k \divides s_{min}(L_1)$
\item $p^k \ndivides s_{min}(L_2)$
\end{itemize}
Then $\mathcal{LR}(L_1) \neq \mathcal{LR}(L_2)$.
\end{theorem**}\label{Main-Theorem2}
\begin{proof}
We know from the above proof that there exists surjective homomorphism $\phi$ from $LR_n(L_1)$ to $C_{p^k}$. Now we proceed to prove that there does not exist a homomorphism from $LR_n(L_2)$ to $C_{p^k}$. Since $p^k \ndivides s_{min}(L_2)$, there exists positive integers $q$ and $r$ such that:
\[s_{min}(L_2) = p^kq  + r\]
 where $0<r<p^k$. Assume there exists a homomorphism $\psi$ from $LR_n(L_2)$ to $C_{p^k}$, where $a$ maps to some $i\in \mathbb{N}$. This homomorphism has to satisfy the condition on shift operation as above, 
 \[\phi\left( T^{p^kq + r}(a)\right) = i\]
 This implies that,
 \[(i + np^kq + nr)\ (mod\ p^k) = i\]
 Since $p^k \divides np^kq$ and $p^k = 2n+1$ we get,
 \[(i + nr)\ (mod\ 2n+1) = i\]
 But since $n$ and $2n+1$ are co-prime and $r$ is less than $(2n+1)$ and greater than $0$, the above equality cannot hold. Thus we have a contradiction. Thus there does not exist a homomorphism from $LR_n(L_2)$ to $C_{p^k}$.
\end{proof}

\begin{corollary} For any Legendrian unknot $L$ such that $s_{min}(L)$ is divisible by an odd number, \[\mathcal{LR} (L) \neq \mathcal{LR}(\normalfont{\texttt{minimal Legendrian unknot}}).\].
\end{corollary}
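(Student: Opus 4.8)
The plan is to derive this directly from Main Theorem~2, taking $L_2$ to be the minimal Legendrian unknot. First I would pin down the reference object: the minimal Legendrian unknot of Figure~\ref{std_unknot} has a crossingless front projection with exactly two cusps, so its writhe is $0$ and its Thurston--Bennequin number is $tb = 0 - \tfrac{2}{2} = -1$; consequently $s_{min}(\text{minimal Legendrian unknot}) = 2\,|tb| = 2$. This is consistent with the two-generator presentation $\langle a,b \mid b = a^{n+1},\ a = b^{n+1}\rangle$ used in Theorem~\ref{minimal-trivial}.

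Next I would unpack the hypothesis. Reading ``$s_{min}(L)$ is divisible by an odd number'' as the statement that there is an odd integer $m > 1$ with $m \divides s_{min}(L)$ (the case $m = 1$ being vacuous), I pick any odd prime $p$ with $p \divides m$, so that $p \divides s_{min}(L)$ and $p \geq 3$. Such a $p$ exists because an odd integer greater than $1$ has an odd prime factor.

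Finally I would invoke Main Theorem~2 with this prime $p$ and exponent $k = 1$. Its first hypothesis, $p^{1} \divides s_{min}(L)$, holds by the choice of $p$; its second hypothesis, $p^{1} \ndivides s_{min}(\text{minimal Legendrian unknot})$, holds because $s_{min}(\text{minimal Legendrian unknot}) = 2$ while $p \geq 3$ does not divide $2$. Main Theorem~2 then yields $\mathcal{LR}(L) \neq \mathcal{LR}(\text{minimal Legendrian unknot})$, which is exactly the assertion.

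There is no substantive obstacle here: the corollary is a repackaging of Main Theorem~2 for the special target $L_2 = \text{minimal Legendrian unknot}$. The only points requiring a moment's care are the elementary observation that an odd integer exceeding $1$ admits an odd prime factor, and the bookkeeping computation $s_{min}(\text{minimal Legendrian unknot}) = 2$; both are routine.
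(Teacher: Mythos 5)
Your proposal is correct and matches the paper's intent: the corollary is stated immediately after Main Theorem 2 with no separate argument, precisely because it is the specialization $L_2 = {}$minimal Legendrian unknot, $k=1$, with $s_{min}(\text{minimal Legendrian unknot}) = 2$ not divisible by any odd prime. Your only added content --- extracting an odd prime factor from the odd divisor and computing $s_{min}=2$ from the two-cusp crossingless front --- is exactly the routine bookkeeping the paper leaves implicit.
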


One may note that only excluded cases from Theorem \ref{Main-Theorem2} are the cases where $s_{min}(L_1) = 2^m s_{min}(L_2)$, for some $m\in \mathbb{N}$.

\section{Motivation and Alternate description}\label{alt_defn}

In an $n$-Legendrian rack, one could define a predicate $C(a, b)$, such that 
\[C(a, b) \iff b = a^{n+1}\]
Such a relation proves useful to describe the relationship between strands related by a cusp. Ideally, it is desirable for an invariant of Legendrian knot to be able to atleast detect the difference between the number of `up' and `down' cusps in standard projections of the oriented Legendrian knot. 

 Natural question that arises is whether it would it be to possible to obtain a  rack invariant of oriented Legendrian knots, by alloting distinct relations to up and down cusps, which distinguishes the `type' of cusp. If such an invariant exists, it would be possible to define predicates $U(a, b)$ and $D(a, b)$, which relate strands joined together by an up and down cusp respectively.  

In this section, we demonstrate the difficulty of obtaining such an invariant in terms of racks. In more precise terms, we show that any invariant of an oriented Legendrian knot such that:
\begin{itemize}
\item Axioms of the invariant are expressible in first order logic.
\item The invariant satisfies the rack axioms.
\item All the axioms are universally quantified.
\end{itemize}
is equivalent to some $n$-Legendrian rack, and thus fails to distinguish cusps.  Infact, the result holds true even when both $U$ and $D$ are not definable in terms of $(*)$. 

This result is shown along the following lines:
\begin{itemize}
\item We construct the axioms for such an invariant in a langauge with signature $(*, U, D)$. 
\item We axiomatize Legendrian Reidemeister moves and rack axioms in this langauge.
\item We show that in such a axiom schema:
\begin{enumerate}
\item $U \equiv D$.
\item $U(a, b) \iff b = a^{n+1}$.
\item $a = a^{2n+2}$.
\end{enumerate}
\item We further demonstrate the equivalence of this with the $n$-Legendrian rack, by showing that not only are axioms of $n$-Legendrian racks, are a consequence of these axioms, but the converse also holds true.
\end{itemize}

\begin{definition} An $n$-Legendrian predicate rack $\mathbb{LR}_n$ is a rack along with two binary predicates $U$ and $D$, such that following axioms are satisfied:

  \vspace{4pt}
   \begin{enumerate}

   \item {LR Move 1:}

   \begin{enumerate}[label =(\alph*)]
\item 
$\forall x.\ \forall y.\ \forall z.\ (U(x*z, y) \wedge D(y, z))  \longleftrightarrow ((x = z) \wedge	(y = (x^{n+2}))$.

\item
$\forall x.\ \forall y.\ \forall z.\ (D(y, z/x) \wedge U(x, y))   \longleftrightarrow ((x = z) \wedge (y = x^{-(n+2)}))$.

%C

\item 
 $\forall x.\ \forall y.\ \forall z.\  (D(x, y) \wedge U(y, z/x))  \longleftrightarrow ((x = z) \wedge (y = (x^{-(n+2)}))$.

%D

\item
$\forall x.\ \forall y.\ \forall z.\ ((D(x*z, y) \wedge U(y, z))   \longleftrightarrow ((x = z) \wedge (y = x^{n+2}))$.

\end{enumerate}
 
 \vspace{4pt}
\item LR Move 2:
\begin{enumerate}[label = (\alph*)]
\item 
 $\forall x\ \forall y\ \forall z.\ (U(x,y)  \longleftrightarrow  U(x*z, y*z))$.

\item 
$\forall x\ \forall y\ \forall z.\ (D(x,y)  \longleftrightarrow D(x*z, y*z))$.

\item 
 $\forall x\ \forall y\ \forall z.\ (U(x, y)  \longrightarrow (z/x)*y = z)$.

\item 
 $\forall x\ \forall y\ \forall z.\ (D(x,y)  \longrightarrow (z*y)/ x = z)$. 

 \end{enumerate}
\end{enumerate} 
 \end{definition}

To every oriented Legendrian knot diagram (in the front projection), we associate a $n$-Legendrian predicate rack in a manner similar to the $n$-Legendrian racks, except that the relation between two strands $a$ and $b$ meeting at a cusp is replaced by the predicate relation $U(a,b)$ or $D(a, b)$. These choice of predicate relation depends on whether the we traverse in upward or downward directions when going from $a$ to $b$. This is illustrated in the Figure \ref{U_D}. The $n$-Legendrian predicate-rack associated to a Legendrian knot $L$ is the Legendrian predicate-rack generated by the associated relations.

\begin{figure}
\centering
\scalebox{0.75}{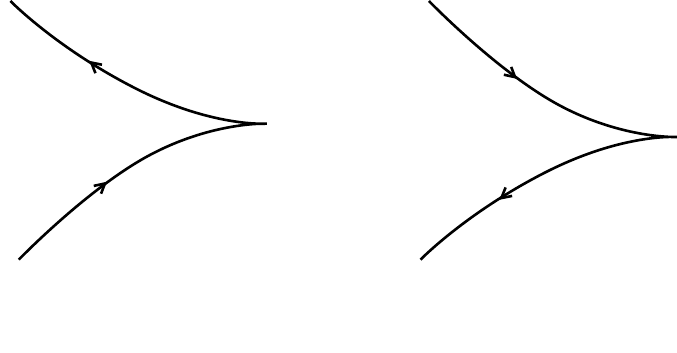}
\caption{Predicates $U$ and $D_L$.}\label{U_D}
\end{figure}

The axioms subsequently derive from the Legendrian Reidemeister moves as illustrated by the Figure \ref{lraxioms}. One can check that they arise as a formulation of the Legendrian Reidemeister moves, though some further justification needs to be provided for $y = x^{n+2}$ and $y = x^{-(n+2)}$ terms in LR Move 1.

\begin{figure}
\centering
\scalebox{0.6}{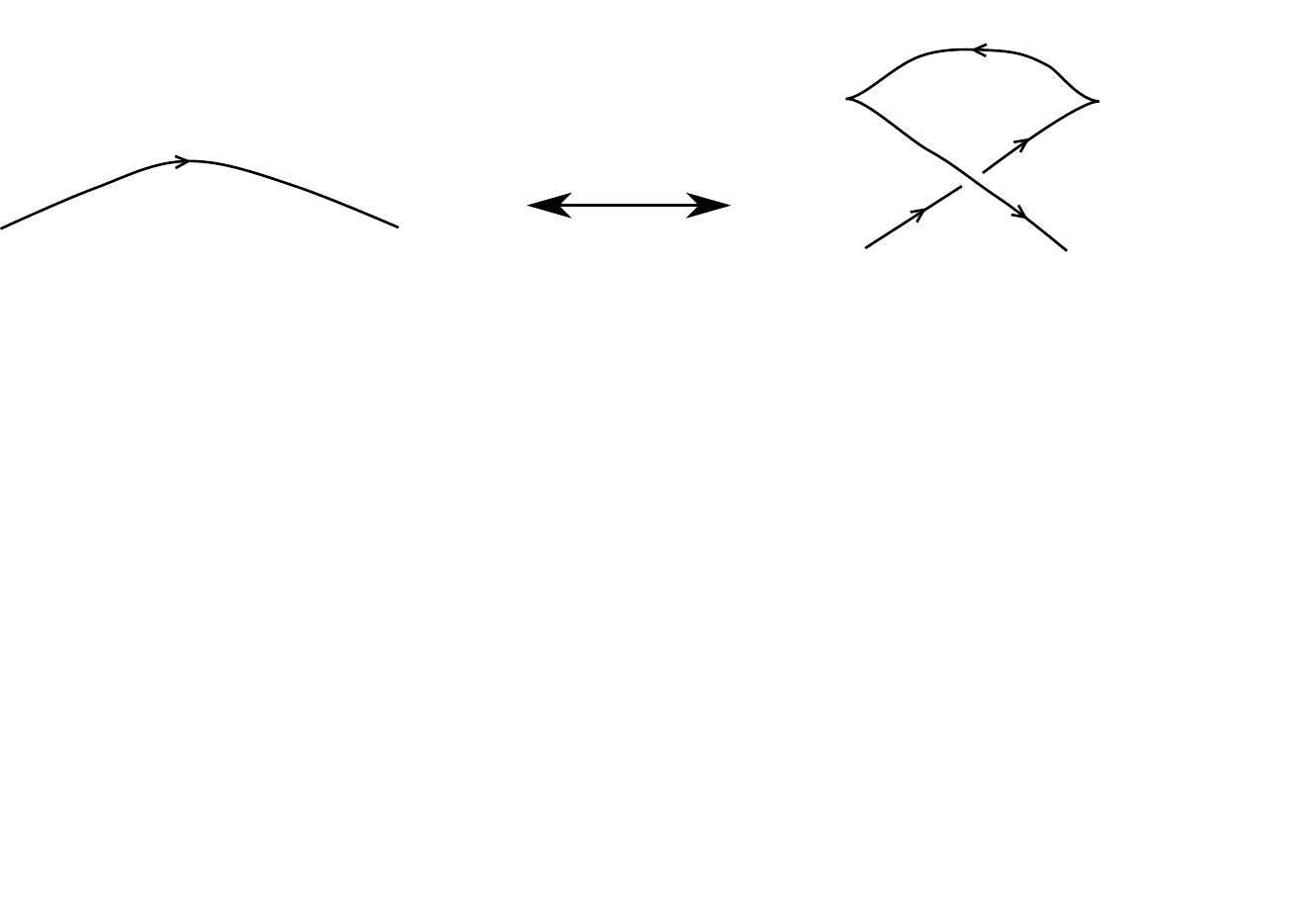}
\caption{Examples of LR axioms}\label{lraxioms}
\end{figure}

We label the figures on left and right, in each of the above examples as $D_L$ and $D'_L$ respectively. Observe that $b$ and $c$ in the move corresponding to the Legendrian Reidemeister move 1 in the above figure, has to belong to the Legendrian predicate rack $\mathbb{LR}_n(D)$, in order for $\mathbb{LR}_n$ to be an invariant. One might further note that $c$ has to equal $a$ to ensure preservation of relations involving the strand corresponding to $a$. The most natural method of ensuring that $b$ also belong to $\mathbb{LR}_n$ is to equate to some product of $a$'s. The following lemma tells us that such a product will eventually equal some $a^n$. 

\begin{lemma} Given an element $x$ in $R$ such that $x$ can be expressed only in terms of $a$, where $a \in R$. Then there exists $m \in \mathbb{Z}_{+}$ such that:
\[ x = a^m\]
\end{lemma}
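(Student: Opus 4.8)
The plan is to show that any term built from a single generator $a$ using the rack operations $*$ and $/$ can be rewritten as a power $a^m$ for some positive integer $m$. I would proceed by structural induction on the formation of the term $x$. The base case is $x = a$ itself, which is $a^1$, so $m = 1$ works. For the inductive step, a term is built either as $x = u * v$ or $x = u / v$ where $u$ and $v$ are shorter terms, each expressible only in terms of $a$; by the induction hypothesis, $u = a^p$ and $v = a^q$ for some $p, q \in \mathbb{Z}_+$.

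The key computational input will be Corollary \ref{LR1-2-2}, which says $w * y^{k+1} = w * y$ for all $k \in \mathbb{N}$, together with the basic power recursion $a^{p+1} = a^p * a$. In the case $x = a^p * a^q$: if $q = 1$ this is immediately $a^{p+1}$; if $q > 1$, write $q = (q-1) + 1$ and apply Corollary \ref{LR1-2-2} (with $w = a^p$, $y = a$, $k = q - 1 \ge 1$) to collapse $a^p * a^q = a^p * a = a^{p+1}$. So in all cases $x = a^{p+1}$. For the case $x = a^p / a^q$: using the right-inverse axiom $(s / t) * t = s$ one has $a^p / a^q$ satisfies $(a^p / a^q) * a^q = a^p$; alternatively I would argue that $a^p / a^q$ can also be collapsed since, as above, $s * a^q = s * a = s^{?}$ forces the quotient to behave like a shift down by one. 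Concretely, from $a^{p} = (a^{p}/a^{q}) * a^{q}$ and the fact just established that $(a^p/a^q) * a^q = (a^p/a^q) * a = (a^p/a^q)^{\text{(shift up by 1)}}$, one reads off that $a^p / a^q$ together with one $*a$ gives $a^p$, so the quotient equals $a^{p-1}$ when $p \ge 2$; the remaining small cases ($p = 1$, giving $a/a^q = a/a$, which is a fixed "zeroth power" type element, or rather needs the hypothesis $2n+2$ wrap-around to be written as a positive power) are handled using the $n$-Legendrian rack identity $a^{2n+2} = a$ to replace any element of the form $a/a$ by $a^{2n+1}$, which is a genuine positive power. This last point is why the statement is phrased for elements of $R$ in the context where the ambient rack is a Legendrian rack — otherwise $a/a$ need not itself be a positive power of $a$.

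The main obstacle I anticipate is bookkeeping the quotient case cleanly: division does not interact with the power notation as transparently as multiplication does, and the power function as defined in the paper only covers exponents in $\mathbb{Z} \setminus \{0, -1\}$, so "$a^0$" and "$a^{-1}$" are not literally in range. The cleanest route is probably to first prove the auxiliary fact that for every term $x$ in $a$ alone, $x * a = x^{+}$ is again such a term and the map "multiply by $a$ on the right" acts as a bijection on the set of such terms with finite order dividing $2n+2$ (this uses Lemma \ref{LR1-2-3} or the right-inverse axiom for injectivity and the Legendrian axiom for finiteness of orbits), so that every such $x$ lies on the single cyclic orbit $\{a, a^2, \ldots, a^{2n+1}, a^{2n+2} = a\}$ generated by $a$; then division by $a$ is just the inverse shift along this cycle and stays within positive powers $a^1, \ldots, a^{2n+1}$. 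Once that orbit structure is in place, the induction goes through immediately for both $*$ and $/$, since composing shifts and inverse shifts along a cycle always lands back on the cycle, i.e.\ at some $a^m$ with $1 \le m \le 2n+1$.
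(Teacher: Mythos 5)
Your core argument is the paper's argument: an induction that reduces any $*$-word in $a$ to the form $a^{p}*a^{q}$ and then collapses it to $a^{p}*a=a^{p+1}$ via Corollary \ref{LR1-2-2}. The paper phrases this as induction on the number of occurrences of $a$, viewing the term as a binary tree and applying the induction hypothesis to the two children of the root, but that is the same computation as your structural induction. Where you diverge is in scope: the paper's proof only treats terms built with $*$ (``terms obtained by multiplying the children'') and defers division to an unproved remark that ``analogous results can be proved for words consisting of $a^{-1}$,'' whereas you attempt $u/v$ directly. Your observation there is the genuinely valuable one: $a/a$ is \emph{not} a positive power of $a$ in a general rack (e.g.\ in the free rack on one generator), so the division case really does require the $n$-Legendrian axiom, via $a/a=a^{-2}=a^{2n+1}$ (both sides are sent to $a$ by right multiplication by $a$, which is injective by the right-inverse axiom). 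Your bookkeeping for $a^{p}/a^{q}$ is only sketched, but the cyclic-orbit picture you propose --- right multiplication by $a$ shifts along the cycle $a,a^{2},\dots,a^{2n+2}=a$, and $/a^{q}$ is the inverse shift --- does close it inside an $n$-Legendrian rack. For the statement as the paper actually proves and uses it ($*$-products of $a$'s in an arbitrary rack), your proof is correct and essentially identical to the paper's; the division discussion is a sound extension of something the paper leaves implicit, at the cost of needing the Legendrian hypothesis.
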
\label{rewriting}

\begin{figure}[h]
\centering
\scalebox{0.75}{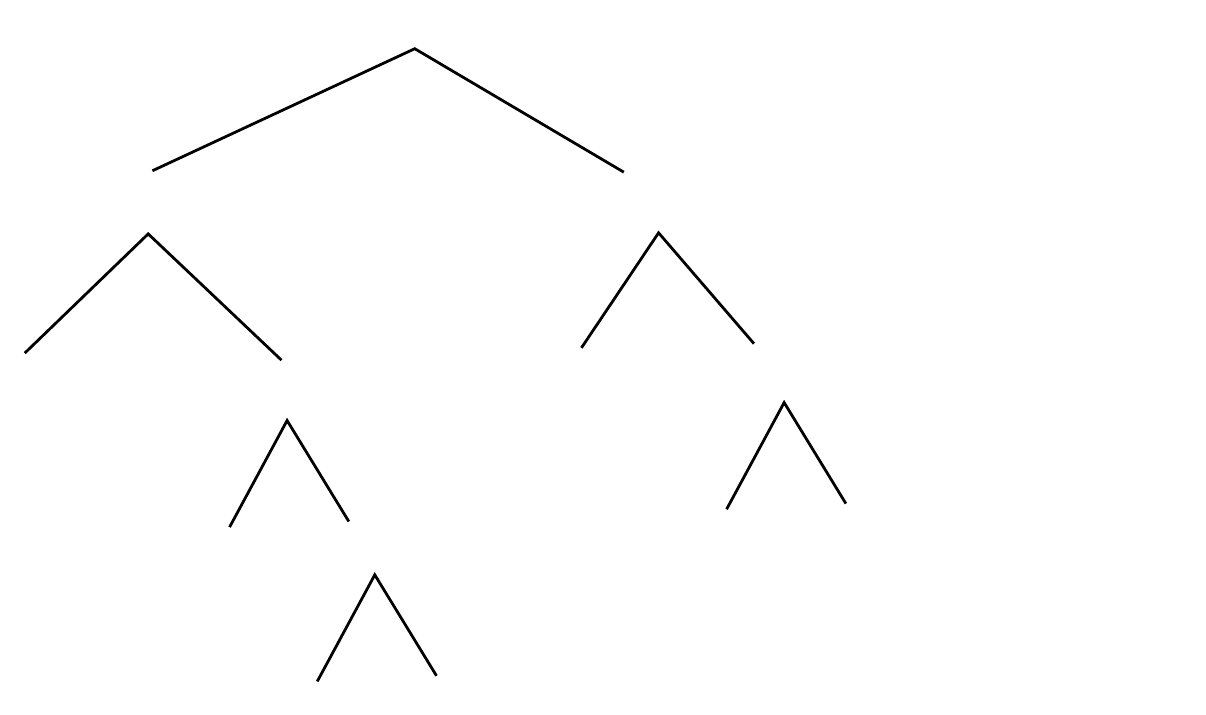}
\end{figure}

\begin{proof}
We prove it be induction on least number of $a$'s in a word representing $x$. It is easy to see that above holds true for 
the cases 1 and 2. We assume that it holds true for all $n \leq k$. We show that it holds true for $n = k+1$. We begin by 
observing that every term consisting of $a$'s be represented by a rooted full binary tree whose leaves consist of $a$, and 
nodes denote terms obtained by multiplying the children of the node. We can further label the edges $L$ and $R$ to denote 
the order of multiplication of children. Consider the root of the binary tree. It has two attached children. Each of them 
denote a term with wordlength equal to or less than $k$. By applying induction hypothesis, we obtain the following 
statement:

\[\exists m_1 \in \mathbb{Z}_{+}.\exists m_2\in \mathbb{Z}_{+}.\ (x = a^{m_1} * a^{m_2})\]
By applying lemma \ref{LR1-2-2} ,
\[x = a^{m_1}*a\]
Thus proved.
\end{proof}
 Thus it follows that $b$ should equal $a^n$, for some $n \in \mathbb{Z}_+$. Analogous results can be proved for words consisting of $a^{-1}$. If $b = a$ or $b = a^2$, it can be shown that the structure reduces to a quandle. For  $n \geq 3$, it gives us a rack invariant, as illustrated in the following theorems. 
 
The rack obtained from the generators modulo generating relations give rise to the the domain set. Now the corresponding functions $U$ and $D$ are obtained by considering the  minimal boolean valued functions generated by the cusps which satisfy the above axioms. Existence of such a minimal function follows from Godel's completeness theorem. .\\
\begin{remark} It maybe interesting to note that the $n$-Legendrian predicate rack defined above, constituted our original formulation of the oriented Legendrian knot invariant. Some of the observations in the computer generated proofs, enabled the simpler structure and definition presented in Section \ref{Leg_racL_defn}.
\end{remark}

\subsection{Proof of Equivalence}
We begin by proving that every $n$-Legendrian predicate-rack is an $n$-Legendrian rack.

\begin{theorem}\label{LPRimpliesLR} Every $n$-Legendrian predicate-rack is an $n$-Legendrian rack.
\end{theorem}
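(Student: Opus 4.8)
The goal is to show that the defining axioms of an $n$-Legendrian predicate rack $\mathbb{LR}_n$ force the underlying rack to satisfy $\forall x.\ x^{2n+2} = x$, i.e., that it is an $n$-Legendrian rack. The plan is to extract this identity directly from the LR Move 1 axioms, which already contain terms like $x^{n+2}$ and $x^{-(n+2)}$. The first step is to produce, for an arbitrary element $x$ of the rack, an honest witness to the predicates $U$ and $D$. For instance, take axiom (1)(a): $\forall x,y,z.\ (U(x*z,y)\wedge D(y,z)) \longleftrightarrow ((x=z)\wedge (y = x^{n+2}))$. Reading it from right to left with the substitution $z = x$ and $y = x^{n+2}$, the right-hand side holds trivially, so we obtain $U(x*x, x^{n+2})$ and $D(x^{n+2}, x)$ as theorems of the theory, for every $x$.

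Next I would feed these predicate instances into a \emph{different} LR Move 1 axiom to get an equation. The natural candidate is axiom (1)(b) or (1)(c), which have $D(\cdot,\cdot)\wedge U(\cdot,\cdot)$ in the hypothesis and conclude an equation of the form $y = x^{-(n+2)}$ together with $x = z$. One should also exploit Lemma \ref{LR1-4} (in the excerpt, the lemma stating $a^{n+1} = a^{-(n+2)}$, whose proof already reduces to $a^{2n+2} = a$) — but note that that lemma is stated as holding in $LR_n$, so here the cleaner route is to re-derive the relevant power manipulations from scratch using only Lemmas \ref{LR1-2}, \ref{LR1-2.5}, \ref{LR1-3} and the right-inverse axiom, all of which hold in \emph{any} rack. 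Concretely, I expect the chain to go: from $U(x*x, x^{n+2})$ and some instance of a $U\wedge D$ or $D\wedge U$ axiom (after matching the crossing terms $x*z$, $z/x$ against $x*x$ via $z=x$ and the right-inverse identity $x/x \cdot$ manipulations), one reads off an equality between $x^{n+2}$ and a negative power of $x$, or directly between $x^{n+2}$ applied twice and $x$. Combining with Lemma \ref{LR1-3}, which gives $(a^{n+1})^{n+1} = a^{2n+1}$, and then one more right-multiplication by $x$, collapses everything to $x^{2n+2} = x$.

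Alternatively — and this may be the shortest path — I would use the LR Move 2 axioms (1)(2)(c): $U(x,y)\longrightarrow (z/x)*y = z$, together with the $U$-instance just produced. Setting $y = x^{n+2}$ and $x \mapsto x*x$ in the witnessed predicate $U(x*x, x^{n+2})$, the axiom yields $(z/(x*x))*x^{n+2} = z$ for all $z$; choosing $z$ cleverly (e.g. $z = (x*x)$, or $z$ built so that $z/(x*x)$ simplifies) and applying Corollary \ref{LR1-2-2} ($w * y^{k+1} = w*y$) to reduce $x^{n+2} = (x^{n+1})*x$ down to just a single $*x$, should force a power identity on $x$. Iterating or combining the two cusp axioms (up-cusp then down-cusp, tracing the "go through the same cusp twice" picture referenced in the remark after Lemma \ref{LR1-3}) returns the element to itself but shifted by $2n+1$ multiplications, giving exactly $x^{2n+2} = x$.

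The main obstacle I anticipate is \textbf{bookkeeping of the biconditional directions and variable matching}: the LR Move 1 axioms are stated as $\longleftrightarrow$ with compound hypotheses involving $x*z$ and $z/x$, so one must be careful to instantiate $z$ (almost certainly $z = x$ in every application, to kill the crossing) and then verify that the crossing-term side of the predicate genuinely reduces — e.g. $x*z = x*x$ needs Lemma \ref{LR1-1} or direct substitution, and $z/x = x/x$ needs the right-inverse axiom — before the equational conclusion can be harvested. Once the matching is set up correctly, the remaining algebra is a routine application of Lemmas \ref{LR1-2}, \ref{LR1-2.5}, \ref{LR1-3}, Corollary \ref{LR1-2-2}, and the rack right-inverse axiom, exactly as in the proof of Theorem \ref{minimal-trivial}. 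I would structure the final write-up as: (i) derive the $U$/$D$ witnesses; (ii) plug into a second Move-1 axiom to get a raw power equation; (iii) simplify via the power lemmas to $x^{2n+2} = x$; (iv) conclude the rack is an $n$-Legendrian rack.
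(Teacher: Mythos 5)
Your overall strategy is the paper's: read the Move 1 biconditionals from right to left to manufacture concrete instances of $U$ and $D$, feed those into the left-to-right directions and the Move 2 implications to harvest equations, and finish with the power lemmas. But there is a concrete gap in the execution, and your ``shortest path'' is a dead end. The witnesses you produce from axiom 1(a), namely $U(x*x,\,x^{n+2})$ and $D(x^{n+2},\,x)$, involve only \emph{positive} powers of $x$, and by Corollary \ref{LR1-2-2} and Lemma \ref{LR1-1} right-translation by $x^{n+2}$, by $x*x$ and by $x$ are the same map on any rack (hence so is division by them). Consequently, plugging these witnesses into Move 2(c) gives $(z/(x*x))*x^{n+2}=z$, which collapses to the right-inverse identity $(z/x)*x=z$ --- a tautology valid in every rack, whether or not $x^{2n+2}=x$ holds --- and no choice of $z$ there can extract the desired relation. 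The same collapse occurs if you feed $D(x^{n+2},x)$ into Move 2(d).

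The missing ingredient is a witness pairing $x$ with a \emph{negative} power of itself. The paper obtains $U(x^{n+2},x)$ from the backward direction of 1(d) and, crucially, $D(x,\,x^{-(n+2)})$ from the backward direction of 1(c) (instantiating $z=x$, $y=x^{-(n+2)}$). Feeding this pair into the forward direction of 1(a) forces $a^{n+2}/a^{-(n+2)}=a^{-(n+2)}$, while Move 2(d) applied to $D(a,a^{-(n+2)})$ with $z=a^{n+1}$ forces $a^{n+2}/a^{-(n+2)}=a^{n+1}$; comparing the two gives $a^{n+1}=a^{-(n+2)}$, and repeated right-multiplication by $a$ (Lemma \ref{LR1-4}) yields $a^{2n+2}=a$. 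Your primary route through the forward direction of 1(b) or 1(c) can be repaired along the same lines, since 1(b) read backwards also supplies $U(x,x^{-(n+2)})$, which together with your $D(x^{n+2},x)$ and 1(c) forward gives $x^{n+2}=x^{-(n+1)}$ and hence $x^{2n+2}=x$. But as written, your step (ii) only ``expects'' an equality with a negative power to appear; it never derives one, and that derivation is exactly where the content of the proof lies.
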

\begin{proof}
LR Move 1(a) can be written in disjunctive normal form after relabelling as,
\[ \forall x. \forall y. \forall z. \ \neg U(x\ * y, z) \vee \neg D(z, y) \vee (x = y) \addtag \]
This can be rewritten after substituting $x_1$ for $x * y$, $y_1$ for $z$ and $z_1$ for $y$.
\[\forall {x_1} . \forall y_1. \forall z_1.\ \neg U(x_1, y_1) \vee \neg D(y_1, z_1) \vee (x_1/z_1 = z_1) \addtag  \]
For $x_1 = a^{n+2}$, $y_1 = a$ and $z_1 = a^{-(n+2)}$,
\[\neg U (a^{n+2}, a) \vee \neg D(a, a^{-(n+2)}) \vee (a^{n+2}/a^{-(n+2)} =  a^{-(n+2)} ) \addtag  \]

Observe that it follows from LR Move-1(D) that by substituting for $y$ and $z$ in $U(y,z)$,
\[\forall x.\ U(x^{n+2},x) \addtag \] 
Similarly it follows from LR Move-1(c) by substituting for $y$ in $D(x,y)$,
\[\forall x.\ D(x,x^{-(n+2)}) \addtag \]
It follows from (4.3),(4.4) and (4.5), 
\[a^{n+2}/a^{-(n+2)} =  a^{-(n+2)}  \addtag \]

Similarly LR Move 2(d), gives us:
\[\forall x. \forall y.\ \neg D(x, y) \vee ((z*x)/y = z). \addtag \]

From (4.4), we get
\[\forall x. \forall y. \forall z. ((z*x)/y = z) \addtag \]

For $x = a$, $y = a^{-(n+2)}$ and $z = a^{n+1}$, the above equation reduces to,
\[
\addtag
(a^{n+1}*a)/a^{-(n+2)} = a^{n+1} 
\]
Applying the above in (4.6), we get:
\[a^{-(n+2)} = a^{n+1} \addtag \]
From Lemma \ref{LR1-4}, 
\[a = a^{2n+2} \addtag \]
\end{proof}

The above theorem tells us that even $n$-Legendrian predicate-rack is also a $n$-Legendrian rack. We need the following theorems to establish that $n$-Legendrian predicate-rack corresponding to two oriented Legendrian knots are the same:

\begin{theorem}\label{cusp_equiv_nth_power} In a predicate Legendrian predicate-rack $\mathbb{LR}_n$, the following holds true:
\[\forall x.\ \forall y.\ D(a,b) \iff b = a^{n+1}\]
\end{theorem}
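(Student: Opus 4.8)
The statement is evidently meant to read $\forall a.\ \forall b.\ D(a,b)\iff b=a^{n+1}$, and the plan is to prove the two implications separately. Throughout I would use that $\mathbb{LR}_n$ is an $n$-Legendrian rack by Theorem~\ref{LPRimpliesLR}, together with the two predicate identities already extracted in the course of that proof: instantiating LR~Move~1(d) at $z=x$, $y=x^{n+2}$ forces $\forall x.\ U(x^{n+2},x)$, and instantiating LR~Move~1(c) at $z=x$, $y=x^{-(n+2)}$ forces $\forall x.\ D(x,x^{-(n+2)})$.

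For the direction $b=a^{n+1}\Rightarrow D(a,b)$: by Lemma~\ref{LR1-4} we have $x^{-(n+2)}=x^{n+1}$ in every $n$-Legendrian rack, so the identity $\forall x.\ D(x,x^{-(n+2)})$ becomes $\forall x.\ D(x,x^{n+1})$. Hence $b=a^{n+1}$ immediately gives $D(a,b)$.

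For the direction $D(a,b)\Rightarrow b=a^{n+1}$: assume $D(a,b)$. The key step is to feed this into the biconditional LR~Move~1(b), namely $\forall x\forall y\forall z.\ (D(y,z/x)\wedge U(x,y))\leftrightarrow((x=z)\wedge(y=x^{-(n+2)}))$, via the substitution $x:=a^{n+2}$, $y:=a$, $z:=b*a^{n+2}$. Then $U(x,y)=U(a^{n+2},a)$ holds by the first identity above, while $D(y,z/x)=D\bigl(a,(b*a^{n+2})/a^{n+2}\bigr)=D(a,b)$ by the right-inverse axiom, so the left-hand side of the biconditional holds. Therefore so does the right-hand side, and in particular $x=z$, i.e.\ $a^{n+2}=b*a^{n+2}$. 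To conclude, observe that by Corollary~\ref{LR1-2-2} and the definition of the power function $a^{n+1}*a^{n+2}=a^{n+1}*a=a^{n+2}$, so $b*a^{n+2}=a^{n+1}*a^{n+2}$; cancelling $a^{n+2}$ on the right via the right-inverse axiom yields $b=a^{n+1}$.

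The main obstacle is purely one of search: recognizing that among the four LR~Move~1 biconditionals it is (b) that should be instantiated, and that the substitution $z=b*a^{n+2}$ is exactly what collapses the $D$-hypothesis back to $D(a,b)$; once that is spotted the rest is a short rack computation using Corollary~\ref{LR1-2-2} and two applications of the right-inverse axiom. I would also note the harmless quantifier typo in the statement and check that the exponents $n+1,n+2$ lie in the domain $\mathbb{Z}\setminus\{0,-1\}$ of $pow$, which they do since $n\in\mathbb{N}$.
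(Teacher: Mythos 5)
Your proposal is correct, and the forward implication ($b=a^{n+1}\Rightarrow D(a,b)$) is exactly the paper's argument: both extract $\forall x.\ D(x,x^{-(n+2)})$ from LR~Move~1(c) and convert the exponent with Lemma~\ref{LR1-4} after invoking Theorem~\ref{LPRimpliesLR}. The converse is where you diverge. The paper assumes $D(a,b)$ and makes \emph{two} axiom instantiations: first LR~Move~2(d) (miscited there as 2(a)) to obtain $(z*b)/a=z$, hence $x/b=x/a$ for all $x$ and in particular $a^{n+2}/b=a^{n+1}$; second, the disjunctive-normal-form of LR~Move~1(a) together with $U(a^{n+2},a)$ and $D(a,b)$ to obtain $a^{n+2}/b=b$; comparing the two gives $b=a^{n+1}$. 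You instead make a \emph{single} instantiation of LR~Move~1(b) with $x=a^{n+2}$, $y=a$, $z=b*a^{n+2}$, so that the right-inverse axiom collapses $D(y,z/x)$ to the hypothesis $D(a,b)$, and the biconditional's conclusion $x=z$ hands you $a^{n+2}=b*a^{n+2}$, from which Corollary~\ref{LR1-2-2} and one right-cancellation finish the job. Your route is slightly more economical (one LR~Move~1 clause instead of one from each group, and no appeal to the Move~2 axioms at all), and it sidesteps the paper's garbled citation; the paper's route has the minor advantage of producing the reusable intermediate identity $x/b=x/a$. Both are sound, and your side conditions (the quantifier typo, the domain of $pow$) are checked appropriately.
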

\begin{proof}
Assume that $b=a^{n+1}$. From Lemma \ref{LR1-4},
\[x^{n+1} =  x^{-(n+2)}\]
As observed in the (4.5) of the previous theorem, $D(x, x^{-(n+2)})$ holds true for all $x$. It follows that  $D(x, x^{n+1})$. Thus $D(a, b)$ holds.
Now we prove the converse, assume $D(a,b)$. This implies from the $n$-Legendrian Reidemeister axiom 2(a) that:
\[\forall x. (x*a)/b = x \addtag \]
By substituting $x'/a$ for $x$ in above, we get:
\[\forall x'.\ (x'/b) = (x'/a) \addtag \]
For $x' = a^{n+2}$ we get,
\[a^{n+2}/b = a^{n+1} \addtag\]

From  (4.4) of the previous theorem, $U(a,a^{n+2})$,, and from our assumtion that $D(a,b)$ holds true. By substituting in (4.2) of the previous theorem we get,
\[ a^{n+2}/b = b \addtag \]
Using (4.14), we get
\[b = a^{n+1} \addtag \]
Thus proved.
\end{proof}

\begin{theorem} In a predicate Legendrian predicate-rack $\mathbb{LR}_n$, the following holds true:
\[\forall x.\ \forall y.\ U(a,b) \iff\ b = a^{n+1}\]
\end{theorem}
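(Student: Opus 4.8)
The plan is to prove the two implications separately, mirroring the proof of Theorem~\ref{cusp_equiv_nth_power} but entering the Legendrian Reidemeister axioms from the ``$U$-side'' rather than the ``$D$-side''. Two auxiliary power identities in $\mathbb{LR}_n$ get used repeatedly, so I would record them first: $(a^{n+1})^{n+2}=a$ and $(b^{n+2})^{n+1}=b$. Both follow from material already in the excerpt. For the first, by the definition of power and Lemma~\ref{LR1-3} we have $(a^{n+1})^{n+2}=(a^{n+1})^{n+1}\ast a^{n+1}=a^{2n+1}\ast a^{n+1}$, which collapses to $a^{2n+1}\ast a=a^{2n+2}=a$ by Corollary~\ref{LR1-2-2} together with the identity $a^{2n+2}=a$ established in Theorem~\ref{LPRimpliesLR}. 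For the second, $b^{n+2}=b^{n+1}\ast b$, so Lemma~\ref{LR1-2.5} and Lemma~\ref{LR1-3} give $(b^{n+2})^{n+1}=(b^{n+1})^{n+1}\ast b=b^{2n+1}\ast b=b^{2n+2}=b$, again using Theorem~\ref{LPRimpliesLR}.

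For the direction $b=a^{n+1}\Rightarrow U(a,b)$, I would invoke equation~(4.4) from the proof of Theorem~\ref{LPRimpliesLR}, namely $\forall x.\ U(x^{n+2},x)$, and instantiate it at $x=a^{n+1}$. This yields $U\big((a^{n+1})^{n+2},\,a^{n+1}\big)$, and the first power identity rewrites $(a^{n+1})^{n+2}$ as $a$, so $U(a,a^{n+1})=U(a,b)$ holds.

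For the converse $U(a,b)\Rightarrow b=a^{n+1}$, I would use equation~(4.2) from the proof of Theorem~\ref{LPRimpliesLR}, i.e.\ $\forall x_1\forall y_1\forall z_1.\ \neg U(x_1,y_1)\vee\neg D(y_1,z_1)\vee(x_1/z_1=z_1)$, instantiated at $x_1=a$, $y_1=b$, $z_1=b^{n+1}$. Since $U(a,b)$ is the hypothesis and $D(b,b^{n+1})$ holds by the already proved Theorem~\ref{cusp_equiv_nth_power}, the disjunction forces $a/b^{n+1}=b^{n+1}$. Right-multiplying by $b^{n+1}$, the right-inverse axiom and Lemma~\ref{LR1-2} give $a=b^{n+1}\ast b^{n+1}=b^{n+2}$. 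Applying the power map $x\mapsto x^{n+1}$ to both sides and using the second power identity $(b^{n+2})^{n+1}=b$ produces $a^{n+1}=b$, completing this direction.

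I do not expect a genuine obstacle: the argument is essentially forced once one observes that for the $U$ predicate the cusp relation enters LR~Move~1 as a \emph{hypothesis} $U(\cdot,\cdot)$ rather than as a derived equation, so one picks it up through (4.2) and (4.4) instead of through the axioms used for $D$. The only step demanding care is the bookkeeping of the two power identities $(a^{n+1})^{n+2}=a$ and $(b^{n+2})^{n+1}=b$, where Lemmas~\ref{LR1-2}, \ref{LR1-2.5}, \ref{LR1-3}, Corollary~\ref{LR1-2-2}, and the identity $a^{2n+2}=a$ from Theorem~\ref{LPRimpliesLR} must be applied in the right order. Together with Theorem~\ref{cusp_equiv_nth_power} this also establishes $U\equiv D$ on $\mathbb{LR}_n$, confirming that the intended $U/D$ refinement of cusps collapses.
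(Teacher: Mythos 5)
Your proof is correct, but it takes a genuinely different route from the paper. The paper disposes of the $U$-version in one line: it observes that the axiom system of $\mathbb{LR}_n$ is symmetric under interchanging $U$ and $D$, and concludes that the proof of Theorem~\ref{cusp_equiv_nth_power} carries over verbatim with the two predicates swapped. You instead give a direct equational derivation: the forward direction by instantiating $\forall x.\ U(x^{n+2},x)$ (equation (4.4)) at $x=a^{n+1}$ and collapsing $(a^{n+1})^{n+2}$ to $a$, and the converse by feeding the hypothesis $U(a,b)$ together with $D(b,b^{n+1})$ --- imported from the already-proved $D$-theorem --- into the clause (4.2) to get $a/b^{n+1}=b^{n+1}$, hence $a=b^{n+2}$ and $b=a^{n+1}$. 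Both of your auxiliary power identities check out against Lemmas~\ref{LR1-2}, \ref{LR1-2.5}, \ref{LR1-3}, Corollary~\ref{LR1-2-2} and $a^{2n+2}=a$ from Theorem~\ref{LPRimpliesLR}. The trade-off: the paper's symmetry argument is shorter, but its premise that the axioms ``remain unchanged'' under the swap is only true up to logical equivalence --- LR Move 2(c) and 2(d) are not syntactic mirror images of one another, so a careful reader must verify that $\forall z.\ (z/x)*y=z$ and $\forall z.\ (z*y)/x=z$ are interderivable in a rack. Your direct proof sidesteps that subtlety entirely, at the cost of being longer and of making the $U$-characterization logically dependent on the $D$-characterization rather than parallel to it; it also yields the corollary $U\equiv D$ as an immediate by-product, just as the paper's route does.
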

\begin{proof}
Axioms of Legendrian predicate-racks are symmetric in $U$ and $D$, which means that the axioms remain unchanged even if we interchange $U$ and $D$ in the axioms. This implies that by replacing $D$ with $U$ in proof of , the proof still holds true. The result thus follows.
\end{proof}

\begin{corollary} The following holds in every $n$-Legendrian predicate rack.
\[\forall x.\forall y.\ U(x, y) \iff D(x, y)\]
\end{corollary}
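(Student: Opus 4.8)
The plan is to obtain the corollary by simply composing the two characterizations of the cusp predicates that were just established. Theorem~\ref{cusp_equiv_nth_power} asserts that in any $n$-Legendrian predicate-rack $\mathbb{LR}_n$ one has $D(x,y) \iff y = x^{n+1}$ for all $x$ and $y$, and the theorem immediately following it gives the matching equivalence $U(x,y) \iff y = x^{n+1}$ (itself deduced from the fact that the axiom schema of $\mathbb{LR}_n$ is invariant under swapping the roles of $U$ and $D$). So I would fix arbitrary $x, y \in \mathbb{LR}_n$ and chain the two biconditionals: $U(x,y)$ holds precisely when $y = x^{n+1}$, which in turn holds precisely when $D(x,y)$. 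Since $x$ and $y$ were arbitrary, this yields $\forall x.\forall y.\ U(x,y) \iff D(x,y)$.

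There is essentially no obstacle in this argument; the only point requiring a moment's care is that both auxiliary theorems are genuinely universally quantified biconditionals over all pairs of elements of the rack (the displayed statements are written with the letters $a,b$, but the quantifiers range over the whole underlying set), so instantiating them at the fixed $x,y$ is legitimate, and no further side conditions on $n$ are needed.

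For completeness one could instead give a self-contained derivation that does not invoke these two theorems directly, re-running the reasoning of Theorems~\ref{LPRimpliesLR} and~\ref{cusp_equiv_nth_power} from the axioms and appealing to the $U \leftrightarrow D$ symmetry of the axiom schema at the end; but this is strictly longer and uses exactly the facts already proved, so the chaining argument above is the one I would present.
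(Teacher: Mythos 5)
Your chaining argument is exactly the intended one: the paper states this corollary without proof, immediately after the two theorems characterizing $D(a,b)$ and $U(a,b)$ as equivalent to $b=a^{n+1}$, and the result follows by composing those two biconditionals just as you describe. Correct, and same approach as the paper.
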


From the above theorem and corollary, it follows that by substituting for $U$ and $D_L$ in the presentation of an $n$-Legendrian predicate-rack corresponding to the front projection of Legendrian knot $L$, one obtains the presentation of the corresponding $n$-Legendrian rack.  For the sake of completeness of the equivalence result, we show that the axioms of $n$-Legendrian predicate-racks are satisfied by $n$-Legendrian racks.

\begin{theorem} Consider an $n$-Legendrian rack $LR_n$ with the following choice of predicate:
\[U(x, y) \longleftrightarrow (y = x^{n+1})\]
then it satisfies the $n$-Legendrian predicate axioms.
\end{theorem}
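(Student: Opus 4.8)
The plan is to substitute the given definition $U(x,y)\iff(y=x^{n+1})$ --- and, by the preceding corollary identifying $U$ with $D$, also $D(x,y)\iff(y=x^{n+1})$ --- into each of the eight defining axioms of an $n$-Legendrian predicate rack, and to reduce every resulting biconditional (or implication) to an identity valid in $LR_n$. The tools needed are all already available: Lemma~\ref{LR1-2.5} ($(a*b)^k=a^k*b$), Corollary~\ref{LR1-2-2} ($x*y^{k+1}=x*y$), Lemma~\ref{LR1-3} ($(a^{k+1})^{k+1}=a^{2k+1}$), Lemma~\ref{LR1-4} ($a^{n+1}=a^{-(n+2)}$), the right-inverse axiom (which yields right cancellation $a*c=b*c\Rightarrow a=b$ and $a/c=b/c\Rightarrow a=b$), and the defining relation $x^{2n+2}=x$ of $LR_n$.

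First I would dispatch the LR~Move~2 axioms, which are routine. For (a) and (b), unfolding $U(x*z,y*z)$ gives $y*z=(x*z)^{n+1}=x^{n+1}*z$ by Lemma~\ref{LR1-2.5}, and right cancellation turns this into $y=x^{n+1}$, i.e.\ $U(x,y)$; the $D$ case is identical. For (c), assuming $y=x^{n+1}$ we get $(z/x)*y=(z/x)*x^{n+1}=(z/x)*x=z$ by Corollary~\ref{LR1-2-2} and the right-inverse axiom, and (d) is the mirror computation $(z*y)/x=(z*x^{n+1})/x=(z*x)/x=z$.

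The substantive part is LR~Move~1. Since $U$ and $D$ are interchangeable, axioms (a) and (d) coincide, so it suffices to verify (a). Its left side unfolds to $y=(x*z)^{n+1}=x^{n+1}*z$ together with $z=y^{n+1}$; substituting and using Lemmas~\ref{LR1-2.5} and~\ref{LR1-3} gives $z=(x^{n+1}*z)^{n+1}=(x^{n+1})^{n+1}*z=x^{2n+1}*z$, while $z=z^{2n+2}=z^{2n+1}*z$, so right cancellation of $z$ yields $z^{2n+1}=x^{2n+1}$ and hence $z=x$ by Lemma~\ref{LR1-2-3}; then $y=x^{n+1}*x=x^{n+2}$. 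This is precisely the chain of deductions already made in Case~A of the proof of Main Theorem~1. Conversely, from $x=z$ and $y=x^{n+2}$ one checks $U(x*x,y)$ via $(x*x)^{n+1}=x^{n+1}*x=x^{n+2}$ and $D(y,x)$ via $(x^{n+2})^{n+1}=(x^{n+1})^{n+1}*x=x^{2n+1}*x=x^{2n+2}=x$. For (b) and (c) the only new ingredient is the negative exponent, absorbed by Lemma~\ref{LR1-4}: in (b), $U(x,y)\wedge D(y,z/x)$ gives $y=x^{n+1}$ and $z/x=y^{n+1}=x^{2n+1}$, so $z=x^{2n+1}*x=x^{2n+2}=x$ and $y=x^{n+1}=x^{-(n+2)}$; the converse uses the identity $x/x=x^{2n+1}$, which follows from $x^{2n+1}*x=x^{2n+2}=x$ by right cancellation. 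Axiom (c) is the same computation with the two cusp relations exchanged.

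The main obstacle I anticipate is bookkeeping rather than mathematical depth: keeping the variable renamings in LR~Move~1(a)--(d) consistent and invoking the negative-power identities $a^{n+1}=a^{-(n+2)}$ and $x/x=x^{2n+1}$ exactly where the axiom statements carry the terms $x^{-(n+2)}$. No new lemma is required, and combined with Theorems~\ref{LPRimpliesLR}--\ref{cusp_equiv_nth_power} this establishes that $n$-Legendrian predicate-racks and $n$-Legendrian racks are equivalent notions.
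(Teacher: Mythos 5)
Your proposal is correct and follows essentially the same route as the paper, which simply observes that substituting $U(x,y)\leftrightarrow(y=x^{n+1})$ (and likewise for $D$) turns each predicate axiom into one of the equalities already verified, via Lemmas \ref{LR1-2.5}, \ref{LR1-2-2}, \ref{LR1-3}, \ref{LR1-4} and \ref{LR1-2-3}, in the invariance proof of Main Theorem 1. You merely carry out that substitution and verification explicitly, including the converse directions of the LR Move 1 biconditionals, which the paper leaves implicit.
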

\begin{proof}
By substituting for the $U$ and $D_L$ in $LR_n$, we obtain the same set of equalities that were proved in the theorem \ref{lracL_invariance}, during the course of proving invariance under Legendrian Reidemeister moves. Since these equalities hold true in $LR_n$, it follows that that $LR_n$ is a Legendrian predicate-rack. 
\end{proof}

The following corollary follows from the equivalence of Legendrian predicate-racks and Legendrian racks.
\begin{corollary} Each $n$-Legendrian predicate-rack is an $n$-Legendrian rack.
\end{corollary}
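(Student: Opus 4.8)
The plan is to recognize that this corollary is, essentially verbatim, the statement of Theorem~\ref{LPRimpliesLR}, now recorded once more as a formal consequence of the equivalence developed in this subsection; so the proof amounts to unwinding definitions. First I would recall that an $n$-Legendrian predicate-rack $\mathbb{LR}_n$ is, by definition, a rack $(R,*)$ equipped with two binary predicates $U$ and $D$ satisfying the LR Move~1 and LR Move~2 axioms, whereas an $n$-Legendrian rack is merely a rack satisfying $\forall x.\ x^{2n+2}=x$. Hence, to prove the corollary it suffices to show that the underlying rack of any $\mathbb{LR}_n$ satisfies this last identity.

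That, however, is precisely what Theorem~\ref{LPRimpliesLR} establishes: from LR Move~1(a), LR Move~1(c), LR Move~1(d) and LR Move~2(d) one deduces $a^{-(n+2)}=a^{n+1}$ and then, via Lemma~\ref{LR1-4}, $a^{2n+2}=a$ for all $a$. Therefore the forgetful passage $\mathbb{LR}_n\mapsto(R,*)$ always produces an $n$-Legendrian rack, which is exactly the claim.

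I would close by noting that this passage is an \emph{equivalence}, not just a one-way implication: Theorem~\ref{cusp_equiv_nth_power} together with its mirror statement for $U$ shows that in any predicate-rack the predicates are forced, $U(x,y)\iff D(x,y)\iff y=x^{n+1}$, so forgetting them discards no data; and the theorem immediately preceding the corollary shows conversely that an $n$-Legendrian rack, endowed with $U(x,y):\iff y=x^{n+1}$ and $D:=U$, satisfies all the predicate axioms. Consequently the two notions carry the same information. There is no genuine obstacle remaining here, since the substantive arguments were already discharged in Theorems~\ref{LPRimpliesLR} and~\ref{cusp_equiv_nth_power}; the only point worth double-checking is that the predicates of a predicate-rack are uniquely determined by $*$, which is exactly the content of Theorem~\ref{cusp_equiv_nth_power} and its $U$-analogue.
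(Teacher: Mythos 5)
Your proposal is correct and matches the paper, which offers no separate argument for this corollary but simply presents it as an immediate consequence of the preceding equivalence results --- indeed, as you observe, the statement is verbatim that of Theorem~\ref{LPRimpliesLR}, so citing that theorem (whose proof derives $a^{-(n+2)}=a^{n+1}$ and hence $a^{2n+2}=a$ from the LR Move axioms and Lemma~\ref{LR1-4}) discharges it completely. Your additional remarks on the forced equality $U\equiv D$ and the converse direction are consistent with the paper's surrounding discussion.
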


\section{Experimenting with Automated Theorem Proving}\label{fotp}

Automated theorem provers are computer programs which check if a given statement is a logical consequence of a set of axioms/definitions. They either attempt to prove a statement using a technique called resolution or they construct counterexamples using finite models of the axioms. Automated theorem provers for first-order logic, deal with axioms and statements expressible in first order logic. The previous use of automated theorem proving for unknot recognition using quandles described in\cite{Fish},  motivated the authors to carry out a similar exercise for Legendrian knots. The obstruction to undertaking such an exercise was the lack of any known rack invariants for Legendrian knots.  

We then carried out the following sequence of experiments in the automated theorem prover -Prover9 and Mace4\cite{Prover9},  which aided the discovery of the rack invariants described in the paper:
\begin{enumerate}

\item The set of axioms, resembling the axioms listed in Section \ref{alt_defn}, were used as a candidate first-order logic formalization of Legendrian knots. The set of axioms that were employed differed only in the respect that we used $y = x$ instead of the $(y = x^{n+2})$ and $(y = x^{-(n+2)})$  terms in the LR move 1.

\item The above mentioned axiomatization ended up proving that 
\[U(a,b) \rightarrow a = b\]
This implied that these racks were equivalent to the topological quandle. We subsequently replaced the term  $(y = x)$ in LR moves 1 with $(y = x^m)$, and varied the value of $m$ till Mace4 was able to produce a counter example for the assertion:
\[U(a,b) \rightarrow a = b\]
%Prover9 was further able to prove that the minimal Legendrian unknot was trivial for all the above mentioned values of $n$. Similarly for a non minimal Legendrian unknot, Mace4 provided finite model counterexamplies to the assertion that all the strands are equal. This led us to conjecturing about the relationship between number of cusps.
\item We further carried out experiments to test various conjectures about triviality of racks for different Legendrian unknots and for different values of $m$. Some of the observations in subsequently generated proofs guided us towards the equivalence of $U$ and $D$, as well as their definability in terms of the rack operation $(*)$.

\item The definability of $U$ and $D$ in terms of $(*)$,  led to the current formulation of $n$-Legendrian racks. It also involved some further refinements (by human means) to bring it to the minimal possible expression, in terms of number of axioms and wordlength of the rack elements.\end{enumerate}

Subsequently, Prover9 and Mace4 were used to derive and test conjectures about some of the $n$-Legendrian racks associated to Legendrian knots. Based on observations in computer generated proofs, more general proofs were obtained which held true for  arbitrary Legendrian racks. In some of the cases, the proofs produced in the paper are near-faithful human translations of computer-generated proofs, such as the proof of lemma \ref{LR1-1} and theorem \ref{LPRimpliesLR}. In some of the cases, observations from computer generated proofs guided us to many interesting lemmas, which were subsequently used to obtain proofs shorter than the machine generated proofs of the theorems. Main Theorem 1 and the lemmas preceeding it largely fit into this category. In some of the cases, our proofs were completely independent of the computer generated proofs. This includes many of the subcases in Main theorem 1, and theorem \ref{minimal-trivial} . Even in these cases, the automated theorem prover still provided an experimental confirmation of our conjectures. Computer generated finite models were also used to derive the finite rack invariants $C_k$. Observations based on these counter-examples, enabled the more general proof of Main Theorem 2.
One of the other applications of an automated theorem prover is its utility in distinguishing a Legendrian knot from the minimal Legendrian unknot. It can be achieved along on the following lines:
 
 \begin{enumerate}
 \item Input the axioms of an $n$-Legendrian racks and specify the presentation of the Legendrian rack as axioms.
 \item Check if the following statement returns a  counter example in a finite model: 
 \[a_1 = a_2 = a_3 = ... = a_n\]
Where $a_i$'s are the generators.
\item If the above is true, check if $a_1 * a_1 = a_1$.
\item If the rack is non trivial, it returns a counter example through a finite model. Non triviality of the rack implies that it is not the minimal Legendrian unknot.
\item If it fails, repeat it for the axioms corresponding to $n$-Legendrian rack, with a different value of $n$.
\end{enumerate}

These experiments enabled the following conjecture:
\vspace{10pt}
\begin{conjecture} For a Legendrian unknot $L$, such that:
\[\mid tb(L) \mid =  2^k\]
for some $k\in \mathbb{N}$, every $n$-Legendrian rack is trivial.
\end{conjecture}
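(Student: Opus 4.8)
The plan is to run the argument of Theorem~\ref{minimal-trivial} on an arbitrary crossingless front projection. By the Eliashberg--Fraser theorem, a topologically trivial Legendrian knot $L$ has a front diagram without crossings; such a diagram is a single circle with $s_{min}(L)$ cusps and hence $s_{min}(L)$ strands $a_0,\dots,a_{s_{min}(L)-1}$, the only relations being the cusp relations $a_{i+1}=a_i^{\,n+1}$ with indices modulo $s_{min}(L)$. Writing $T(x)=x^{n+1}$ we have $a_i=T^{i}(a_0)$, so $LR_n(L)$ is the one-generator rack $\langle a\mid T^{s_{min}(L)}(a)=a\rangle$ inside the $n$-Legendrian rack $LR_n$. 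Since $\abs{tb(L)}=2^{k}$ and $s_{min}(L)=2\abs{tb(L)}$, the defining relation reads $T^{2^{k+1}}(a)=a$. If we can deduce $a*a=a$ from this, then the usual induction ($a^{2}=a\Rightarrow a^{n+1}=a$) gives $T(a)=a$, so all the generators coincide and $LR_n(L)$ is the one-element trivial rack; since $n$ is arbitrary, this proves the conjecture.

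The crux is to pin down $T^{2}$. By Lemma~\ref{LR1-3}, $T^{2}(x)=(x^{n+1})^{n+1}=x^{2n+1}$; and since $x^{2n+2}=x$ in $LR_n$ while $x^{2n+2}=x^{2n+1}*x$ by definition of the power, we get $x^{2n+1}=x/x$, so $T^{2}(x)=x/x$ for every $x$. Put $S(x):=x/x=T^{2}(x)$. I would next prove the identity $S(b*a)=S(b)*a$: applying self-distributivity of $/$ to $\bigl((b*a)/(b*a)\bigr)/a$ and simplifying with the right-inverse axiom $(b*a)/a=b$ gives $S(b*a)/a=b/b=S(b)$, and right-multiplying by $a$ yields the claim. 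Combined with $(a/a)*a=a$, a short induction then shows $S^{j}(a)=S^{j+1}(a)*a$, equivalently $S^{j+1}(a)=S^{j}(a)/a$, for all $j\ge 0$; thus $S^{j}(a)$ is the element obtained from $a$ by $j$ successive right-divisions by $a$. The defining relation $T^{2^{k+1}}(a)=S^{2^{k}}(a)=a$ therefore says that $2^{k}$ right-divisions of $a$ by $a$ return $a$, and right-multiplying by $a$ a total of $2^{k}$ times, each step undoing one division via $(x/a)*a=x$, converts this into $a=a^{\,2^{k}+1}$.

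Finally I would combine $a=a^{\,2^{k}+1}$ with the $n$-Legendrian rack axiom $a=a^{\,2n+2}$. Right-multiplying the first by $a$ repeatedly gives $a^{\,j}=a^{\,j+2^{k}}$ for every positive integer $j$, and the second gives $a^{\,j}=a^{\,j+2n+1}$ for every positive integer $j$. Because $2n+1$ is odd, $\gcd(2^{k},2n+1)=1$, so there is a positive integer $\alpha$ and a nonnegative integer $\beta$ with $\alpha\cdot 2^{k}=\beta(2n+1)+1$; raising the exponent by $2^{k}$ a total of $\alpha$ times and then lowering it by $2n+1$ a total of $\beta$ times, with all intermediate exponents staying positive, gives $a^{1}=a^{\,1+\alpha 2^{k}}=a^{\,2+\beta(2n+1)}=a^{2}$, that is $a*a=a$, which is what was needed.

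The one genuinely delicate step is the middle one — realising that $T^{2}(x)=x/x$ and that the tower $S^{j}(a)$ collapses to iterated right-division of $a$ by itself; everything after that is bookkeeping, the only technical nuisance being that $pow$ is only a partial function of its integer argument, so the periodicity statements and the B\'ezout combination must be arranged so that no intermediate exponent leaves the range on which $pow$ is defined. A slicker but less self-contained alternative would be to first show that the free $n$-Legendrian rack on one generator is $C_{2n+1}$ (the free rack on one generator being $\mathbb{Z}$ with $i*j=i+1$, whose proper rack quotients are exactly the $C_d$ for $d\ge 1$), so that $LR_n(L)\cong C_{\gcd(2n+1,\,s_{min}(L))}$; this is the trivial rack $C_1$ exactly when $s_{min}(L)$ is a power of $2$, and the same formula at once reproves that $\mathcal{LR}(L)$ differs from the family of the minimal Legendrian unknot whenever $s_{min}(L)$ is not a power of $2$.
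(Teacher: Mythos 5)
This statement is left as a \emph{conjecture} in the paper: the authors offer only experimental evidence from Prover9/Mace4 (machine proofs of triviality for a handful of cusp counts and $1\le n\le 8$) and explicitly say they could not extract a general argument from those proofs. Your proposal, as far as I can check, actually supplies one, so it goes strictly beyond the paper rather than merely taking a different route to the same proof. The reduction to the one-generator presentation $\langle a \mid T^{s_{min}(L)}(a)=a\rangle$ with $T(x)=x^{n+1}$ and $s_{min}(L)=2^{k+1}$ is exactly the paper's own setup from the lemma preceding Main Theorem 2, and the individual steps all verify: $T^2(x)=x^{2n+1}=x/x$ follows from the lemma $(x^{n+1})^{n+1}=x^{2n+1}$ together with $x^{2n+2}=x^{2n+1}*x=x$; the identity $S(b*a)=S(b)*a$ for $S(x)=x/x$ is a correct application of self-distributivity of $/$ to $\bigl((b*a)/(b*a)\bigr)/a$; the induction giving $S^{j+1}(a)=S^{j}(a)/a$ is sound, so the cyclic relation $S^{2^k}(a)=a$ really does convert into $a^{2^k+1}=a$; and the B\'ezout step combining this with $a^{2n+2}=a$ (with all intermediate exponents positive, as required by the partial definition of $pow$) yields $a*a=a$, whence the rewriting lemma collapses the whole rack to a point. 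The genuinely new ingredient relative to the paper is the observation that $T^2$ acts as right-division by the generator, which linearizes the cusp relation into an exponent congruence governed by $\gcd(2^k,2n+1)=1$. Your closing remark deserves emphasis: the same computation gives $LR_n(L)\cong C_{\gcd(2n+1,\,s_{min}(L))}$ for \emph{every} Legendrian unknot, which would subsume Main Theorem 2, its corollary, and both conjectures of Section 5 in a single statement; but that stronger form needs the additional fact, asserted but not proved in the paper, that the free $n$-Legendrian rack on one generator is exactly $C_{2n+1}$ and not a proper quotient of it, so if you want to state it you should prove that separately (e.g., by exhibiting $C_{2n+1}$ as a model separating the powers of $a$).
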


This conjecture arises out of experimental results obtained out of checking triviality for 4,6,8 and 32 cusped Legendrian unknots. For each of the cases above,   the $n$-Legendrian racks, where $n$ varies from 1 to 8, were proved by the automated theorem prover to be trivial. Extracting a general proof technique from the computer generated proofs was fairly difficult, since none of the proofs easily lent themselves to an obvious generalization. One can gauge that by looking at the explosion in proof size, as illustrated by the following data:\\
\begin{center}
\begin{longtable}{*{2}{m{0.5\linewidth}}}

\begin{tabular}{| c | c | c | c | c |}
\hline
 & 4-cusp & 8-cusp & 16-cusp & 32-cusp\\
\hline
1 & 46 & 96 & 146 &	233\\
\hline
2 & 78 &	 98 & 223 & 282\\
\hline
3 & 119	& 202 &	363 & 407\\
\hline
4 & 66	& 202 & 222 & 551\\
\hline
5 & 102 & 181 &	338	& 518\\
\hline
6 & 102	& 139 & 245 & 431\\
\hline
7 & 119	& 146 & 253 & 427\\
\hline
8 & 341 & 445 & 453	& 2017\\
\hline 
\end{tabular}
 
\end{longtable}
\end{center}
\vspace{3pt}

Since automated theorem provers do not lend themselves easily to checking rack isomorphims, for non-trivial racks, we could not obtain any experimental results for general Legendrian unknots. However, by generalizing from the above experimental data and underlying assumptions in the proof of Main Theorem 2, we postulate the following conjecture:
\begin{conjecture}
For Legendrian unknots $L_1$ and $L_2$, such that:
\[tb(L_1)  =  2^k tb(L_2)\]
for some $k\in \mathbb{Z}_{\geq 0}$, 
\[\mathcal{LR}(L_1) = \mathcal{LR}(L_2).\]
\end{conjecture}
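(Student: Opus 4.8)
\medskip
\noindent\textbf{Towards a proof.}
The plan is to compute the isomorphism type of $LR_n(L)$ for \emph{every} Legendrian unknot $L$ and every $n$, and then read the conjecture off that computation. By the Eliashberg--Fraser theorem, $L$ has a crossingless front; if we label its $s := s_{min}(L)$ strands $a_0,\dots,a_{s-1}$ in cyclic order along the orientation, then each cusp contributes the relation $a_{i+1} = a_i^{\,n+1}$ (indices mod $s$). Writing $T(x) := x^{n+1}$, this gives the one-generator presentation
\[
LR_n(L) \;=\; \langle\, a \mid T^{s}(a) = a \,\rangle ,
\]
which is exactly the presentation used in the lemma preceding Main Theorem 2. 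So the whole problem reduces to understanding this one-relator $n$-Legendrian rack.

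First I would show that $LR_n(L) \cong C_{d}$ where $d := \gcd(2n+1,\, s_{min}(L))$. The assignment $a \mapsto 1 \in C_{d}$ is a surjective rack homomorphism: $C_d$ is an $n$-Legendrian rack because $d \mid 2n+1$ (Theorem \ref{fin-L-rack}), and the relation $T^{s}(a)=a$ survives because in $C_d$ one has $i^{j+1} \equiv i+j \pmod d$, so $T^{s}(1) \equiv 1 + ns \equiv 1 \pmod d$ using $d \mid s$. For the reverse inequality on sizes, Lemma \ref{rewriting} together with the Legendrian rack axiom $a^{2n+2}=a$ (which also lets one rewrite $/$, since $a\,/\,a = a^{2n+1}$) shows that every element of $LR_n(L)$ is a power $a^m$, and $a^i * a^j = a^{i+1}$ for all $j \ge 1$ by the definition of $pow$ and Corollary \ref{LR1-2-2}. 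Right-multiplying the relation $a^{1+ns} = a$ by $a$ (an automorphism) repeatedly gives $a^{i+ns}=a^{i}$ for all $i$, so the exponents collapse modulo $\gcd(2n+1, ns) = \gcd(2n+1, s) = d$ (using $\gcd(n,2n+1)=1$); hence $LR_n(L)$ has at most $d$ elements. Combined with the surjection onto $C_d$, this gives $LR_n(L)\cong C_d$.

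The conjecture is then elementary arithmetic. If $tb(L_1) = 2^k\, tb(L_2)$ then $s_{min}(L_1) = 2^k\, s_{min}(L_2)$, and since $2n+1$ is odd we have $\gcd(2n+1, 2^k)=1$, so $\gcd(2n+1, s_{min}(L_1)) = \gcd(2n+1, s_{min}(L_2))$ for every $n$. Therefore $LR_n(L_1) \cong LR_n(L_2)$ for all $n\in\mathbb{N}$, that is, $\mathcal{LR}(L_1) = \mathcal{LR}(L_2)$. The same formula $LR_n(L)\cong C_{\gcd(2n+1,\,s_{min}(L))}$ also yields $LR_n(L)\cong C_1$ whenever $\abs{tb(L)} = 2^k$, so it settles the preceding conjecture as well, and it is consistent with Theorem \ref{minimal-trivial} and with Main Theorem 2.

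The main obstacle is the size bound in the second step: one must verify that adjoining the single relation $T^{s}(a)=a$ to the one-generator $n$-Legendrian rack produces no collapse beyond the expected cyclic one, i.e. a normal-form/confluence statement for which Lemma \ref{rewriting} is the crossing-free core and the right-multiplication trick handles the relation. Everything after that is bookkeeping. A secondary point of care is interpretive: ``$\mathcal{LR}(L_1) = \mathcal{LR}(L_2)$'' must be read as termwise isomorphism of the $\mathbb{N}$-indexed families, and one takes as given, via Eliashberg--Fraser, that a topologically trivial Legendrian knot admits a crossingless front realizing the presentation above.
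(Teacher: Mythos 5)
The statement you are addressing is left as a \emph{conjecture} in the paper: the authors support it only by machine experiments on unknots with $4$, $8$, $16$ and $32$ cusps and explicitly say they could not extract a general argument from the computer-generated proofs. So there is no proof in the paper to compare against, and what you propose is a genuine strategy that, as far as I can check, is sound and would upgrade the conjecture to a theorem. Your key step is the closed-form computation $LR_n(L)\cong C_{d}$ with $d=\gcd(2n+1,\,s_{min}(L))$ for every topologically trivial $L$: the surjection onto $C_d$ is exactly the map the paper constructs in the lemma preceding Main Theorem 2 (with $p^k$ replaced by $d$, using Theorem \ref{fin-L-rack}), and the conjecture then reduces to the observation that $\gcd$ against the odd number $2n+1$ ignores factors of $2$. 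The formula is consistent with, and subsumes, Theorem \ref{minimal-trivial}, Main Theorem 2, and the paper's other conjecture ($d=1$ when $s_{min}(L)$ is a power of $2$), which is good evidence you have the right invariant. The one step that genuinely must be written out is the upper bound $\lvert LR_n(L)\rvert\le d$, and you have correctly isolated it: Lemma \ref{rewriting} as stated only treats words built from $*$, so you need its extension to words involving $/$ (your identity $a/a=a^{2n+1}$ together with the $/$-analogue $x/y^{k+1}=x/y$ of Corollary \ref{LR1-2-2} does this), plus the verification that the rack congruence on the free one-generator $n$-Legendrian rack $C_{2n+1}$ generated by the single pair $\left(a^{1+ns},\,a\right)$ is precisely identification of exponents modulo $\gcd(2n+1,ns)$ --- which follows because right multiplication by $a$ is a bijection, so the set of periods is a subgroup of $\mathbb{Z}/(2n+1)$. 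With those details filled in, the argument is complete; nothing in it conflicts with anything proved in the paper.
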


The computer generated proofs and counter-examples are available on:
\begin{center}
\url{https://github.com/prathamesh-t/Legendrian-Racks}
\end{center}

\section{Conclusion and Further Questions}\label{concl}

In this work, we introduced the rack invariants of Legendrian knots and demonstrated their ability to distinguish some of the Legendrian knots. It leaves several questions wide open. Some of the more basic questions about Legendrian racks include:

Whether these racks indeed detect orientation? Would it be possible to expect analogues of Main Theorem 2 for Legendrian knots which are not topologically trivial?
 
A more geometric interpretation of the Legendrian rack structures would be fairly desirable. Perhaps the connection to Thurston-Bennequin number might shed some light in this direction. A more general result that could precisely formulate the link between Legendrian rack and Thurston-Bennequin number might be possible and very desirable. 

On the more algebraic side, relationship with other algebraic structures such as groups, analogous to the case of quandles and conjugacy action on groups, would prove fairly useful. Such an analogue, might enable a possible proof about the decidability of the word problem on Legendrian racks. Possibilities of co-cycle invariants and rack homologies and whether they lead to any known or new useful invariants, is another question that remains to be explored. 
  
\bibliography{references}

\end{document}